\DeclareMathOperator{\rank}{rank}
\DeclareMathOperator{\tr}{tr}
\DeclareMathOperator{\spann}{span}
\DeclareMathOperator{\ran}{ran}
\DeclareMathOperator*{\minimize}{minimize}
\newcommand{\bbR}{\mathbb{R}}
\newcommand{\abs}[1]{\lvert#1\rvert}
\DeclareMathOperator{\myspan}{{span}}
\DeclareMathOperator{\diag}{{diag}}
\DeclareMathOperator{\mat}{{mat}}
\def\rrank{r}
\renewcommand{\vec}{\operatorname{vec}}
\newcommand{\ignore}[1]{}
\newcommand{\subjectto}{\text{subject to}}
\begin{document}

\title{
Finding a low-rank basis in a matrix subspace
\thanks{This work was supported by JST CREST (Iwata team), JSPS Scientific Research Grants No. 26870149 and No. 26540007, and JSPS Grant-in-Aid for JSPS Fellows No. 267749.}
}


\author{
Yuji Nakatsukasa\and Tasuku Soma\and Andr{\'e} Uschmajew
}


\institute{
           Y. Nakatsukasa \at
           Mathematical Institute \\ University of Oxford, Oxford, OX2 6GG, United Kingdom \\
              \email{yuji.nakatsukasa@maths.ox.ac.uk}           
\and 
T. Soma \at
Graduate School of Information Science \& Technology, \\
            The University of Tokyo,
            7-3-1 Hongo, Bunkyo-ku, Tokyo, Japan \\
     \email{tasuku\_soma@mist.i.u-tokyo.ac.jp}
           \and
           A. Uschmajew \at
              Hausdorff Center for Mathematics \& Institute for Numerical Simulation \\ University of Bonn, 53115 Bonn, Germany\\\email{uschmajew@ins.uni-bonn.de}
}

\date{}

\maketitle

\begin{abstract}
For a given matrix subspace, how can we find a basis that consists of low-rank matrices? 
This is a generalization of the sparse vector problem.
It turns out that when the subspace is spanned by
rank-1 matrices, 
the matrices can be obtained by the tensor CP decomposition. 
For the higher rank case, the situation is not as straightforward. 
In this work we present an algorithm based on a greedy process
applicable to higher rank problems. 
 Our algorithm first estimates the minimum rank by applying soft singular value thresholding 
to a nuclear norm relaxation, and then computes a matrix with that rank using the method of alternating projections. 
We provide local convergence results, and 
compare our algorithm  with several alternative approaches. 
Applications include 
data compression beyond the classical truncated SVD, 
computing accurate eigenvectors of a near-multiple eigenvalue,  
image separation and graph Laplacian eigenproblems. 
\keywords{
low-rank matrix subspace
\and
$\ell^1$ relaxation
\and
alternating projections
\and
singular value thresholding
\and
matrix compression 
}
\end{abstract}

\section{Introduction}
Finding a succinct representation of a given object has been one of the central tasks in the computer and data sciences.
For vectors, sparsity (i.e., $\ell^0$-norm) is a common measure of succinctness.
For example, exploiting prior knowledge on sparsity of a model is now considered 
crucial in machine learning and statistics~\cite{buhlmann2011statistics}. 
Although the naive penalization of the $\ell^0$-norm easily makes the problem intractable, it turns out that exploiting the $\ell^1$-norm as a regularizer yields a tractable convex problem and performs very well in many settings~\cite{Candes2008,CandesTao2006}.
This phenomenon is strongly related to compressed sensing, which shows
under reasonable assumptions that
 the $\ell^1$-recovery almost always recovers a sparse solution for an undetermined linear system. 
Since matrices are more complex objects, one may consider different criteria on succinctness for matrices, namely the rank.
Interestingly, a variety of concepts from sparse vector recovery 
carry over to low-rank matrix recovery, for which the nuclear norm plays 
a role analogous to the $\ell^1$-norm for sparse vectors~\cite{fazel2002thesis,Fazel01arank}.
The nuclear norm convex relaxation has demonstrated its theoretical and practical usefulness in matrix completion and other low-rank optimization tasks, and is nowadays accompanied by an endless array of optimization algorithms; see, e.g.,~\cite{ames2011nuclear,Cai:SVT,CandesRecht2009,CandesTao2009,LiuSunToh2012,LiuVandenberghe2009,Recht2011}, and~\cite{RechtFazelParillo2010} for a general overview.

Recently, the \emph{sparse vector problem} has been studied by several authors~\cite{Barak2014a,Hand2013,QuSunWright2014,SpielmanWangWright2013}.
In the sparse vector problem, we are given a linear subspace $S$ in $\bbR^n$, and the task is to find the sparsest \emph{nonzero} vector in $S$.
The celebrated results for $\ell^1$-regularization are not directly applicable, and the sparse vector problem is less understood.
The difficulty arises from the nonzero constraints; a natural $\ell^1$-relaxation only yields the trivial zero vector.
Thus the sparse vector problem is nonconvex in its own nature. 
A common approach is based on the \emph{hypercontractivity}, that is, optimizing the ratio of two different norms.
An algorithm that optimizes the $\ell^1 / \ell^\infty$-ratio is studied in \cite{Hand2013,SpielmanWangWright2013}, and the algorithm in~\cite{QuSunWright2014} works with the $\ell^1 / \ell^2$-ratio.
Optimization of the $\ell^4 / \ell^2$-ratio was recently considered by Barak, Kelner, and Steurer~\cite{Barak2014a}.
A closely related problem is the \emph{sparse basis problem}, in which we want to find a basis of $S$ that minimizes the sum of the $\ell^0$-norms. 
In addition to imposing the sparsity of vectors as in the sparse vector problem, a major difficulty here lies in ensuring their linear independence. The recent work~\cite{SparseDictionaryI,SparseDictionaryII} is an extensive theoretical and practical approach to the very similar problem of sparse dictionary learning.

In this paper, we consider the following problem, which we call the \emph{low-rank basis problem}.
Let $\mathcal{M}$ be a matrix subspace in $\bbR^{m \times n}$ of dimension $d$. 
The goal  is to
\begin{equation}
    \begin{alignedat}{2}\label{eq: low-rank basis problem}
        &\minimize  & \quad &\rank(X_1) + \dots + \rank(X_d) \\
        &\subjectto & \quad &\spann\{X_1,\dots,X_d\} = \mathcal{M}.
    \end{alignedat}
\end{equation}
The low-rank basis problem generalizes the sparse basis problem. 
To see this, it suffices to identify $\mathbb{R}^n$ with the subspace of diagonal matrices in $\mathbb{R}^{n \times n}$. 
As a consequence, any result on the low-rank basis problem~\eqref{eq: low-rank basis problem}  (including relaxations and algorithms) will apply to the sparse basis problem with appropriate changes in notation (matrix nuclear norm for diagonal matrices becomes $1$-norm etc.).
Conversely, some known results on the sparse basis problem may generalize to the low-rank basis problem~\eqref{eq: low-rank basis problem}.
An obvious but important example of this logic concerns the complexity of the problem: 
It has been shown in Coleman and Pothen~\cite{ColemanPothen1986} that even if one is given the minimum possible value for $\|x_1\|_0 + \dots + \| x_d\|_0$ in the sparse basis problem, it is NP-hard to find a sparse basis.
Thus the low-rank basis problem~\eqref{eq: low-rank basis problem} is also NP-hard in general.

A closely related (somewhat simpler) problem is the following \emph{low-rank matrix problem}:
\begin{equation}
    \begin{alignedat}{2}\label{eq:low-rank elem problem}
    &\minimize & \quad &\rank (X) \\
    &\subjectto & \quad &X \in \mathcal{M}, X \neq O.
    \end{alignedat}
\end{equation}
This problem is a matrix counterpart of the sparse vector problem.
Again, 
 \eqref{eq:low-rank elem problem} is NP-hard~\cite{ColemanPothen1986}, even if $\mathcal{M}$ is spanned by diagonal matrices.
Note that our problem \eqref{eq:low-rank elem problem} does not fall into the framework of Cai, Cand\'{e}s, and Shen \cite{Cai:SVT}, in which algorithms have been developed for finding a low-rank matrix $X$ in an affine linear space described as $\mathcal{A}(X) = b$ (matrix completion problems are of that type).
In our case we have $b = 0$, but we are of course not looking for the zero matrix, which is a trivial solution for~\eqref{eq:low-rank elem problem}. 
This requires an additional norm constraint, and modifications to the algorithms in~\cite{Cai:SVT}.

\subsection{Our contribution}
We propose an alternating direction algorithm 
for the low-rank basis problem
that does (i) rank estimation, and (ii) obtains a low-rank basis. 
We also provide convergence analysis for our algorithm. 
Our algorithm 
is based on a greedy process, 
whose use we fully justify. 
In each greedy step 
we solve the low-rank matrix problem~\eqref{eq:low-rank elem problem}
in a certain subspace, and hence our algorithm 
can also solve 
the low-rank matrix problem.

Our methods are iterative, and switch between the search of a good low-rank matrix and the projection on the admissible set. The second step typically increases the rank again.
The solution would be a fixed point of such a procedure.
We use two phases with different strategies for the first step, i.e., finding a low-rank matrix.

In the first phase we find new low-rank guesses by applying the singular value shrinkage operator (called \emph{soft thresholding}) considered in Cai, Cand\'{e}s, and Shen~\cite{Cai:SVT}.
In combination with the subspace projection, this results in the matrix analog of the alternating direction algorithm proposed very recently by Qu, Sun, and Wright~\cite{QuSunWright2014} for finding sparse vectors in a subspace.
An additional difficulty, however, arises from the fact that we are required to find 
more than one linearly independent low-rank matrices in the subspace.
Also note that our algorithm adaptively changes the thresholding parameter during its execution, which seems necessary for our matrix problem, although \cite{QuSunWright2014} fixes the thresholding parameter before starting their algorithm.
In our experiments it turns out that the use of the shrinkage operator clearly outperforms alternative operations, e.g., truncating singular values below some threshold, in that it finds matrices with correct rank quite often, but that the distance to the subspace $\mathcal{M}$ is too large.
This is reasonable as the only fixed points of soft thresholding operator are either zero, or, when combined with normalization, matrices with identical nonzero singular values, e.g., rank-one matrices.

As we will treat the subspace constraint as non-negotiable, we will need further improvement.
We replace the shrinkage operator in the second phase by best approximation of the estimated rank (which we call \emph{hard thresholding}). 
Combined with the projection onto the admissible set $\mathcal{M}$, this then delivers low-rank matrices in the subspace $\mathcal{M}$ astonishingly reliably (as we shall see, this second phase is typically not  needed when a rank-one basis exists).


Our convergence analysis in Section~\ref{sec:convanal} provides further insights into the behavior of the process, in particular the second phase. 

\subsection{Applications}
The authors were originally motivated to 
consider the low-rank basis problem 
by applications in discrete mathematics~\cite{Gurvits2004,Ivanyos2014,Lovasz1989}. 
It can be useful also in various other settings, some of which we outline below. 
\subsubsection*{Compression of SVD matrices}
Low-rank matrices arise frequently in applications 
and a low-rank (approximate) decomposition 
such as the SVD is often used to 
reduce the storage to represent the matrix $A\in\mathbb{R}^{m\times n}$: $A \approx U\Sigma V^T$.
Here $\Sigma\in\mathbb{R}^{r\times r}$ where $r$ is the rank. 
The memory requirement for storing the whole $A$ is clearly $mn$, whereas 
$U,\Sigma,V$ altogether require $(m+n)r$ memory (we can dismiss $\Sigma$ by merging it into $V$). 
Hence,  the storage reduction factor is
\begin{equation}  \label{eq:compini}
\frac{(m+n)r}{mn},   
\end{equation}
so if the rank $r$ is much smaller than $\min(m,n)$ then we achieve significant memory savings. 

This is all well known, but here we go one step further and try to reduce the memory cost for representing the matrices $U,V$.
Note that the same idea of using a low-rank representation is useless here, as these matrices have orthonormal columns and hence the singular values are all ones. 

The idea is the following: if we \emph{matricize} the columns of $U$ (or $V$), those matrices might have a low-rank structure. More commonly, there might exist a nonsingular matrix $W\in\mathbb{R}^{r\times r}$ such that the columns of $UW$ have low-rank structure when matricized. 
We shall see that many orthogonal matrices that arise in practice have this property. 
The question is, then, how do we find such $W$ and the resulting compressed representation of $U$? 
This problem boils down to the low-rank basis problem, in which $\mathcal{M}$ is  the linear subspace spanned by the matricized columns of $U$.

To simplify the discussion here we assume $m=s^2$ for an integer $s$ (otherwise, e.g. when $m$ is prime, a remedy is to pad zeros to the bottom). 
Once we find an appropriate $W$ for $U\in\mathbb{R}^{s^2\times r}$, we represent the matricization of each column as a low-rank (rank $\hat r$) matrix $U_{\hat r}\hat\Sigma V_{\hat r}$, which is represented using $2s\hat r$ memory, totaling to $2sr\hat r+r^2$ where $r^2$ is for $W$. 
Since the original $U$ requires $s^2r$ memory with $r\ll s^2$, this can significantly reduce the storage if $\hat r\ll r$. 

When this is employed for both $U$ and $V$ the overall storage reduction for representing $A$ becomes 
\begin{equation}
  \label{eq:compressrate}
\frac{4sr\hat r+r^2}{mn}  . 
\end{equation}
For example, when $m=n$, $r = \delta m$ and $\hat r=\hat\delta s $ for $\delta,\delta \ll 1$ 
this factor is 
\begin{equation}
  \label{eq:compressrateest}
4\delta\hat \delta+\delta^2, 
\end{equation}
achieving a ``squared'' reduction factor compared with \eqref{eq:compini}, which is about $2\delta$. 

Of course, we can further reduce the memory by recursively matricizing the columns of $U_{\hat r}$, as long as it results in data compression.

\subsubsection*{Computing and compressing an exact eigenvector of a multiple eigenvalue}\label{sec:introeigvec}
Eigenvectors of a multiple eigenvalue are not unique, 
and those corresponding to near-multiple eigenvalues generally cannot be computed to high accuracy. 
We shall show that it is nonetheless sometimes possible to compute exact eigenvectors of  near-multiple eigenvalues, if additional 
property is present that the eigenvectors are low-rank when matricized. 
This comes with the additional benefit of storage reduction, as discussed above.
We describe more details and experiments in  Section~\ref{sec:exeig}.

\subsubsection*{The rank-one basis problem}
An interesting and important subcase of the low-rank basis problem is the \emph{rank-one basis problem}; in this problem, we are further promised that a given subspace $\mathcal{M}$ is spanned by rank-one matrices.
Gurvits~\cite{Gurvits2004} first considered the rank-one basis problem in his work on the \emph{Edmonds problem}~\cite{Edmonds1967}.
Below let us explain his original motivation and connections to combinatorial optimization.

The task of Edmonds' problem is to find a 
matrix of \emph{maximum} rank in a given matrix subspace.
It is known that sampling a random element in a matrix subspace yields a solution with high probability, and therefore the main interest is to design \emph{deterministic} algorithms
(of course, this motivation is closely related to \emph{polynomial identity testing} and the P vs BPP conjecture~\cite{motwani2010randomized}).
For some special cases, one can devise a deterministic algorithm by exploiting combinatorial structure of a given subspace. 
In particular, if a given basis consists of 
rank-one matrices (which are known), Edmonds' problem reduces to linear matroid intersection~\cite{Lovasz1989}, which immediately implies the existence of deterministic algorithms.

Gurvits~\cite{Gurvits2004} studied a slightly more general setting: a given subspace is only promised to be spanned by some rank-one matrices, which are unknown.
If one can solve the rank-one basis problem, this setting reduces to the previous case using the solution of rank-one basis problem.
Indeed, he conjectured that the rank-one basis problem is NP-hard, and designed a deterministic algorithm for his rank maximization problem without finding these rank-one matrices explicitly.
We also note that Ivanyos~et~al.~\cite{Ivanyos2014} investigated the Edmonds problem for a matrix subspace on finite fields, which is useful for \emph{max-rank matrix completion} (see \cite{HKM05,Harvey2006} and references therein).

\subsubsection*{Tensor decomposition}


The rank-one basis problem has an interesting connection to tensor decomposition: finding a rank-one basis for a $d$-dimensional matrix subspace amounts to finding a \emph{CP decomposition}~(e.g., \cite[Sec.~3]{KoldaBader2009}) of representation rank $d$ for a third-order tensor with slices $M_1,\dots,M_d$ that form a basis for $\mathcal{M}$.
For the latter task very efficient nonconvex optimization algorithm like alternating least squares exist, which, however, typically come without any convergence certificates. 
An alternative, less cheap, but exact method uses simultaneous diagonalization, which are applicable when $d\leq \min(m,n)$.
Applying these methods will often be successful when a rank-one basis exists, but fails if not.
This tensor approach seems to have been overseen in the discrete optimization community so far, and we explain it in Appendix~\ref{sec: reuction to tensors}.

Even in general, when no rank-one basis exists, the representation of matrices $M_1,\dots,M_d$ in a low-rank basis can be seen as an instance of \emph{tensor block term decomposition}~\cite{DeLathauwer2008b} with $d$ blocks. Hence, given any tensor with slices $M_1,\dots,M_d$ (not necessarily linearly independent), our method may be used to obtain a certain block term decomposition with low-rank matrices constrained to the span of $M_1,\dots,M_d$. These matrices may then be further decomposed into lower-rank components. In any case, the resulting sum of ranks of the basis provides an upper bound for the CP rank of the tensor under consideration.

While in this paper we do not dwell on the potential applications of our method to specific tensor decomposition tasks as they arise in signal processing and data analysis (see~\cite{cichocki2,KoldaBader2009} for overview), we conduct a numerical comparison with a state-of-the-art tensor algorithm for CP decomposition for synthetic rank-one basis problems; see Section~\ref{sec:comparetensor}.


\subsection{Outline and notation}
The rest of this paper is organized as follows.
Section~\ref{sec:greedy} proves that a greedy algorithm would solve 
the low-rank basis problem, if each greedy step 
(which here is NP-hard)
is successful. 
In Section \ref{sec:alg}, we present our algorithm for the low-rank basis problem that follows the greedy approach.
We then analyze convergence of phase II in our algorithm in Section \ref{sec:convanal}.
Experimental evaluation of our algorithm is illustrated in Section~\ref{sec: experiments}. 
For the special case of the rank-one basis problem, we describe the alternative approach via tensor decomposition in Appendix~\ref{sec: reuction to tensors}. 

\paragraph{Notation} We summarize our notation:
$m\times n$ is the size of the matrices in $\mathcal{M}$; $d$  is the  dimension of the subspace $\mathcal{M} \subseteq \mathbb{R}^{m \times n}$;
$\rrank$ will denote a rank. We use the notation 
$\mat(x)\in\mathbb{R}^{m\times n}$ for the matricization of a vector $x\in\mathbb{R}^{mn}$, and 
$\vec(X)\in\mathbb{R}^{mn}$ denotes the inverse operation for $X \in \mathbb{R}^{m\times n}$. 

%
%
%

\section{The abstract greedy algorithm for the low-rank basis problem}\label{sec:greedy}

As already mentioned, the low-rank basis problem~\eqref{eq: low-rank basis problem} for a matrix subspace $\mathcal{M}$ is a generalization of the sparse basis problem for subspaces of $\mathbb{R}^n$.
In~\cite{ColemanPothen1986} it was shown that a solution to the sparse basis problem can be in principle found using a greedy strategy.
The same is true for~\eqref{eq: low-rank basis problem}, as we will show next. The corresponding greedy algorithm is given as Algorithm~\ref{alg:greedy algorithm}. 
Indeed, this algorithm can be understood as a greedy algorithm for an \emph{infinite matroid}~\cite{Oxley1992} of finite rank.
We can prove that Algorithm~\ref{alg:greedy algorithm} finds a minimizer of \eqref{eq: low-rank basis problem}, by adapting a standard proof for greedy algorithms on \emph{finite} matroids.
Note that this fact does not imply that \eqref{eq: low-rank basis problem} is tractable, since finding $X_\ell^*$ in the algorithm is a nontrivial task.

\begin{algorithm}[h!]
\linespread{1.2}\selectfont
\DontPrintSemicolon
\KwIn{Subspace $\mathcal{M} \subseteq \mathbb{R}^{m \times n}$ of dimension $d$.}
\KwOut{Basis $\mathcal{B} = \{ X_1^*,\dots,X_d^*\}$ of $\mathcal{M}$.}
Initialize $\mathcal{B} = \emptyset$.\;
\For{$\ell = 1,\dots,d$}{
	Find $X_\ell^* \in \mathcal{M}$ of lowest possible rank such that $\mathcal{B} \cup \{ X_\ell^*\}$ is linearly independent.\;
	$\mathcal{B} \leftarrow \mathcal{B} \cup \{ X_\ell^* \}$\;
}
\caption{Greedy meta-algorithm for computing a low-rank basis}\label{alg:greedy algorithm}
\end{algorithm}


\begin{lemma}
Let $X_1^*,\dots,X_d^*$ denote matrices constructed by the greedy Algorithm~\ref{alg:greedy algorithm}. Then for any $1 \le \ell \le d$ and linearly independent set $\{ X_1,\dots,X_\ell\} \subseteq \mathcal{M}$ with $\rank(X_1) \le \dots \le \rank(X_\ell)$, it holds
\[
\rank(X_i) \ge \rank(X_i^*) \quad \text{for $i=1,\dots,\ell$.}
\]
\end{lemma}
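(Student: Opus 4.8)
The plan is to adapt the standard optimality proof for greedy algorithms on matroids, exploiting the fact that linear independence of matrices in $\mathcal{M}$ forms a matroid of finite rank $d$. The only nontrivial ingredient is the \emph{exchange} (augmentation) property of linear independence: if $\{Y_1,\dots,Y_p\}$ and $\{Z_1,\dots,Z_q\}$ are linearly independent subsets of a vector space with $p < q$, then some $Z_j$ can be adjoined to the first set while preserving linear independence. This is the Steinitz exchange lemma, and I would invoke it as a black box.

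I would argue by contradiction. Suppose that $\rank(X_i) < \rank(X_i^*)$ for some index $i \in \{1,\dots,\ell\}$. Since $i \le \ell \le d$, the greedy algorithm has already constructed $X_1^*,\dots,X_{i-1}^*$, and this set is linearly independent by construction. I would then apply the exchange lemma to that set, of size $i-1$, and to the linearly independent competitor set $\{X_1,\dots,X_i\}$, of size $i$. As the latter is strictly larger, there is an index $k \in \{1,\dots,i\}$ such that $\{X_1^*,\dots,X_{i-1}^*, X_k\}$ is linearly independent.

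The key estimate now follows from the assumed ordering of the ranks: since $k \le i$, we have $\rank(X_k) \le \rank(X_i) < \rank(X_i^*)$. But at step $i$ the greedy algorithm selects $X_i^*$ as a matrix of \emph{lowest possible} rank among all $X \in \mathcal{M}$ for which $\{X_1^*,\dots,X_{i-1}^*, X\}$ is linearly independent. The matrix $X_k$ is an admissible candidate for that very selection and has strictly smaller rank than $X_i^*$, contradicting the greedy choice. Hence no such index $i$ exists, and $\rank(X_i) \ge \rank(X_i^*)$ for all $i$.

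I do not expect any serious obstacle here; the argument is entirely combinatorial once the exchange lemma is in hand. The only point requiring a little care is the index bookkeeping, namely ensuring that the exchange is applied to the correct pair of sets (the greedy set truncated to size $i-1$ against the competitor set truncated to size $i$) so that the sorted-rank inequality $\rank(X_k) \le \rank(X_i)$ becomes available. The fact that $\mathcal{M}$ is a matrix subspace rather than a subspace of $\mathbb{R}^n$ plays no role beyond supplying the ambient vector space in which linear independence, and hence the matroid structure, is defined.
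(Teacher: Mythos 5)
Your proof is correct and is essentially the paper's argument: both rest on the Steinitz exchange step (one of the $i$ linearly independent competitors must be independent of the $i-1$ greedy matrices) combined with the sorted-rank inequality and the greedy minimality of $X_i^*$. The only cosmetic difference is that the paper wraps this in an induction on $\ell$ and derives the contradiction at the top index, whereas you apply the same contradiction directly at an arbitrary failing index $i$.
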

\begin{proof}
The proof is by induction. By the greedy property, $X_1^*$ is a (nonzero) matrix of minimal possible rank in $\mathcal{M}$, i.e., $\rank(X_1^*) \le \rank(X_1)$. For $\ell >1$, if $\rank(X_\ell) < \rank(X_\ell^*)$, then $\rank(X_i) < \rank(X_\ell^*)$ for all $i = 1,\dots, \ell$. But since one $X_i$ must be linearly independent from $X_1^*,\dots,X_{\ell-1}^*$, this would contradict the choice of $X^*_\ell$ in the greedy algorithm.
\qed\end{proof}

We say a linearly independent set $\mathcal{B}_\ell = \{ \hat{X}_1,\dots,\hat{X}_\ell \} \subseteq \mathcal{M}$ is of \emph{minimal rank}, if
\[
 \sum_{i=1}^\ell \rank(\hat{X}_i) = \min \left\{ \sum_{i=1}^\ell \rank(X_i)  \vcentcolon \text{$\{X_1,\dots,X_\ell\} \subseteq \mathcal{M}$ is linearly independent} \right\}.
\]
The following theorem is immediate from the previous lemma.

\begin{theorem}
Let $X_1^*,\dots,X_d^*$ denote matrices constructed by the greedy Algorithm~\ref{alg:greedy algorithm}, and let $\mathcal{B}_\ell = \{X_1,\dots,X_\ell\} \subseteq \mathcal{M}$ be a linearly independent set with $\rank(X_1) \leq \dots \leq \rank(X_\ell)$. Then $\mathcal{B}_\ell$ is of minimal rank if (and hence only if) 
\[
\rank(X_i) = \rank(X_i^*) \quad \text{for $i=1,\dots,\ell$.}
\]
In particular, $\{ X_1^* ,\dots,X_\ell^* \}$ is of minimal rank.
\end{theorem}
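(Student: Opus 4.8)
The plan is to reduce the whole statement to the preceding lemma together with a single key identity: that the minimal rank value equals $\sum_{i=1}^\ell \rank(X_i^*)$, the rank sum of the greedy set itself. Once this identity is established, both implications of the biconditional follow in one line each, and the concluding assertion that $\{X_1^*,\dots,X_\ell^*\}$ is of minimal rank drops out along the way.

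First I would record that the greedy set $\{X_1^*,\dots,X_\ell^*\}$ is, by construction of Algorithm~\ref{alg:greedy algorithm}, linearly independent, and is therefore a legitimate competitor in the definition of minimal rank; this yields the upper bound $\min \le \sum_{i=1}^\ell \rank(X_i^*)$. For the matching lower bound I would invoke the lemma: any linearly independent $\{X_1,\dots,X_\ell\} \subseteq \mathcal{M}$ may be relabeled so that its ranks are sorted, and then $\rank(X_i) \ge \rank(X_i^*)$ holds termwise, whence $\sum_{i=1}^\ell \rank(X_i) \ge \sum_{i=1}^\ell \rank(X_i^*)$. Taking the minimum over all such sets gives $\min \ge \sum_{i=1}^\ell \rank(X_i^*)$, so the two bounds coincide, the common value is $\sum_{i=1}^\ell \rank(X_i^*)$, and in particular the greedy set itself is of minimal rank.

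With the minimal value pinned down, the \emph{if} direction is immediate: if $\rank(X_i) = \rank(X_i^*)$ for every $i$, then $\sum_i \rank(X_i) = \sum_i \rank(X_i^*)$ equals the minimum, so $\mathcal{B}_\ell$ is of minimal rank. For the \emph{only if} direction I would exploit the rigidity of the termwise bound: if $\mathcal{B}_\ell$ is of minimal rank, then $\sum_i \rank(X_i) = \sum_i \rank(X_i^*)$, while the lemma already gives $\rank(X_i) \ge \rank(X_i^*)$ for each $i$; a sum of nonnegative slacks that vanishes forces every slack to vanish, so $\rank(X_i) = \rank(X_i^*)$ for all $i$. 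This is exactly the parenthetical ``hence only if.''

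I expect no genuine obstacle here, since the substantive work is already contained in the lemma; the remaining content is purely the standard matroid-style optimality argument. The only points that require care are bookkeeping ones: noting that the definition of minimal rank is insensitive to reordering, so that restricting attention to rank-sorted sets loses no generality when identifying the minimum, and keeping the comparison strictly termwise so that the final passage from an equality of sums to an equality of each summand is valid.
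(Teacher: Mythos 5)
Your proof is correct and is precisely the argument the paper has in mind: the paper states the theorem is ``immediate from the previous lemma,'' and your write-up simply unpacks that — the greedy set bounds the minimum from above as a competitor, the lemma's termwise inequality bounds it from below, and the rigidity of a vanishing sum of nonnegative slacks gives the ``only if'' direction. No discrepancy with the paper's approach.
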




A remarkable corollary is that the ranks of the elements in a basis of lowest rank are essentially unique.

\begin{corollary}
The output $\mathcal{B} = \{X_1^*,\dots,X_d^*\}$ of Algorithm~\ref{alg:greedy algorithm} solves the low-rank basis problem~\eqref{eq: low-rank basis problem}, that is, provides a basis for $\mathcal{M}$ of lowest possible rank. Any other basis of lowest rank takes the same ranks $\rank(X_\ell^*)$ up to permutation.
\end{corollary}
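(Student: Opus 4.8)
The plan is to derive both assertions directly from the preceding Theorem by specializing to $\ell = d$, since in a $d$-dimensional subspace the notions of ``basis'' and ``linearly independent set of cardinality $d$'' coincide. First I would record the elementary observation that the greedy output is automatically sorted by rank: at step $\ell$ the algorithm selects $X_\ell^*$ of lowest possible rank subject to being linearly independent from $X_1^*,\dots,X_{\ell-1}^*$, and imposing an additional independence constraint can only raise (never lower) the minimal attainable rank, so that $\rank(X_1^*)\le\dots\le\rank(X_d^*)$. Consequently $\{X_1^*,\dots,X_d^*\}$ is a legitimate instance of the ordered, linearly independent sets to which the Theorem applies.

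For the first claim I would simply invoke the ``in particular'' part of the Theorem with $\ell = d$: the set $\{X_1^*,\dots,X_d^*\}$ is of minimal rank. Unwinding the definition of minimal rank for $\ell=d$, this says that $\sum_{i=1}^d \rank(X_i^*)$ equals the minimum of $\sum_{i=1}^d \rank(X_i)$ taken over all linearly independent $\{X_1,\dots,X_d\}\subseteq\mathcal{M}$. Because any such set spans the $d$-dimensional space $\mathcal{M}$ and is therefore a basis, this minimum is exactly the optimal value of the low-rank basis problem~\eqref{eq: low-rank basis problem}, and $\mathcal{B}$ attains it. Hence $\mathcal{B}$ is an optimal solution.

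For the uniqueness-of-ranks claim I would take an arbitrary basis $\{Y_1,\dots,Y_d\}$ of lowest rank and reorder it so that $\rank(Y_1)\le\dots\le\rank(Y_d)$; this reordering leaves the rank sum unchanged, so the sorted sequence is still of minimal rank. Applying the necessity (``only if'') direction of the Theorem with $\ell=d$ to this sorted basis yields $\rank(Y_i)=\rank(X_i^*)$ for every $i$. Thus the multiset $\{\rank(Y_1),\dots,\rank(Y_d)\}$ coincides with $\{\rank(X_1^*),\dots,\rank(X_d^*)\}$, which is precisely the assertion that the ranks agree up to permutation.

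There is essentially no hard analytic step here, as the Theorem already carries the combinatorial content. The only points requiring care are the bookkeeping ones: confirming that the greedy output is rank-sorted (so that the Theorem is applicable to it in the first place), identifying a ``size-$d$ independent set'' with a ``basis'' via $\dim\mathcal{M}=d$, and using the permutation-invariance of the rank sum that legitimizes sorting an arbitrary optimal basis before applying the necessity direction.
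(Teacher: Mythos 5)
Your proposal is correct and follows exactly the route the paper intends: the corollary is stated as an immediate consequence of the preceding theorem, obtained by specializing to $\ell=d$ and identifying size-$d$ linearly independent sets with bases of the $d$-dimensional space $\mathcal{M}$. Your extra care in verifying that the greedy output is rank-sorted (so the theorem's hypotheses apply to it) and in sorting an arbitrary optimal basis before invoking the ``only if'' direction supplies precisely the bookkeeping the paper leaves implicit.
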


It is worth mentioning that even for the analogous sparse basis problem our results are stronger than Theorem 2.1 in~\cite{ColemanPothen1986} (which only states that $\mathcal{B}$ will be a sparsest basis). Moreover, our proof is different and considerably simpler. We are unaware whether the above results on the particular low-rank basis problem have been observed previously in this simple way.

\section{Finding low-rank bases via thresholding and projection}\label{sec:alg}

In this main section of this article, we propose an algorithm that tries to execute the abstract greedy Algorithm~\ref{alg:greedy algorithm} using iterative methods on relaxed formulations. 


The greedy algorithm suggests finding the matrices $X_1,\dots,X_d$ one after another, during which we
monitor the linear dependence when computing $X_\ell$ with respect to the previously computed $X_1,\dots,X_{\ell-1}$, and apply some restart procedure when necessary. Alternatively, one can try to find low-rank matrices $X_1,\dots,X_d \in \mathcal{M}$ in parallel, monitor their linear independence, and reinitialize the ones with largest current ranks in case the basis becomes close to linearly dependent. In both cases, the linear independence constraint, which substantially increases the hardness of the problem, is in principle ignored as long as possible, and shifted into a restart procedure. 
Therefore, we mainly focus on iterative methods to solve the problem~\eqref{eq:low-rank elem problem} of finding a single low-rank matrix $X$ in $\mathcal{M}$. The complete procedure for the low-rank basis problem will be given afterwards in Section~\ref{subsec: the full algorithm}. 


The first algorithm we consider for solving the low-rank basis problem~\eqref{eq:low-rank elem problem} alternates between soft singular value thresholding (shrinkage) and projection onto the subspace $\mathcal{M}$, and will be presented in the next subsection.
During our work on this paper, an analogous method for the corresponding sparse vector problem of minimizing $\| x \|_0$ over $x \in S, \ \| x\|_2 = 1$ has been independently derived and called a ``nonconvex alternating direction method (ADM)'' for a modified problem in the very recent publication~\cite{QuSunWright2014}.
This reference also contains a motivation for using the Euclidean norm for normalization.
We have decided to adopt this derivation, but will use the terminology \emph{block coordinate descent} (BCD) instead, which seems more in line with standard terminology regarding the actual update rules. 
However, as it turns out, this algorithm alone indeed provides good rank estimators, but very poor subspace representations.
This is very understandable when the target rank is higher than one, since the fixed points of the singular value shrinkage operator explained below are matrices whose positive singular values are all equal, which do not happen in generic cases.
Therefore, we turn to a second phase that uses hard singular value thresholding (rank truncation) for further improvement, as will be explained in Section~\ref{sec:algconv}.
%

\subsection{Phase I: Estimating a single low-rank matrix via soft thresholding}\label{sec:algrank}

The starting point is a further relaxation of
~\eqref{eq:low-rank elem problem}: the rank function, that is, the number of nonzero singular values, is replaced by the matrix nuclear norm $\| X \|_*$, which equals the sum of singular values.
This leads to the problem
\begin{equation}\label{eq: considered problem for one matrix relaxed}
\begin{alignedat}{2}
&\minimize & \quad &\| X \|_*\\
&\subjectto & \quad &X \in \mathcal{M}, \text{ and } \ \| X \|_F = 1.
\end{alignedat}
\end{equation}
The relaxation from rank to nuclear norm can be motivated by the fact that in case a rank-one solution exists, it will certainly be recovered by solving~\eqref{eq: considered problem for one matrix relaxed}. 
For higher ranks, it is less clear under which circumstances the nuclear norm provides an exact surrogate for the rank function under the given spherical constraint.
For an example of a space $\mathcal{M}$ spanned by matrices of rank at most $r$ for which the minimum in~\eqref{eq: considered problem for one matrix relaxed} is attained at a matrix of rank larger than $r$, 
consider $M_1 = \mbox{diag}(1,-\sqrt{\epsilon},-\sqrt{\epsilon},0,0,0,0)$, $M_2 = 
\mbox{diag}(0,1,0,\sqrt{\epsilon},\sqrt{\epsilon},0,0)$, and $
M_3 = \mbox{diag}(0,0,1,0,0,\sqrt{\epsilon},\sqrt{\epsilon})$. 
Every linear combination involving at least two matrices has at least rank four.
So the $M_i$ are the only matrices in their span of rank at most three.
After normalization w.r.t. the Frobenius norm, their nuclear norm equals $\|M_i\|_*/\|M_i\|_F=(1+2\sqrt{\epsilon})/\sqrt{1+2\epsilon}=1+2\sqrt{\epsilon}+O(\epsilon)$.
But for $\epsilon$ small enough, the rank five matrix $X=M_1-\sqrt{\epsilon}M_2-\sqrt{\epsilon}M_3=(1,0,0,\epsilon,\epsilon,\epsilon,\epsilon)$ has a smaller nuclear norm $\|X\|_*/\|X\|_F=(1+4\epsilon)/\sqrt{1+4\epsilon^2}=1+4\epsilon+O(\epsilon^2)$ after normalization.

\subsubsection*{Soft thresholding and block coordinate descent}

Nevertheless, 
such counterexamples are rather contrived, and
we consider~\eqref{eq: considered problem for one matrix relaxed}  a good surrogate for
~\eqref{eq:low-rank elem problem} in the generic case. 
The problem is still very challenging due to the non-convex constraint. In~\cite{QuSunWright2014} a block coordinate descent (BCD) method has been proposed to minimize the $\ell^1$-norm of a vector on an Euclidean sphere in a subspace of $\mathbb{R}^n$. As we explained above, this problem is a special case of~\eqref{eq: considered problem for one matrix relaxed}, and the algorithm can be generalized as follows.

%


 Given a current guess $X \in \mathcal{M}$ we are looking for a matrix $Y$ of lower-rank in a neighborhood if possible. For this task, we use the \emph{singular value shrinkage operator} $\mathcal{S}_\tau$~\cite{Cai:SVT}: letting $X = U\Sigma V^T$ be a singular value decomposition with $\Sigma = \diag(\sigma_1,\dots,\sigma_{\rank(X)})$, $\sigma_1 \ge \dots \ge \sigma_{\rank(X)} > 0$, we choose $Y$ as
\begin{equation}\label{eq: shrinkage operator}
 Y = \mathcal{S}_\tau(X) = U  \mathcal{S}_\tau(\Sigma)V^T,\quad \mathcal{S}_\tau(\Sigma) = \diag(\sigma_1-\tau,\dots,\sigma_{\rank(X)} - \tau)_+,
\end{equation}
where $x_+ :=\max(x,0)$ and $\tau > 0$ is a thresholding parameter. The rank of $Y$ now equals the number of singular values $\sigma_i$ larger than $\tau$. Note that even if the rank is not reduced the ratios of the singular values increase, since $(\sigma_i - \tau)/(\sigma_j - \tau) > \sigma_i/\sigma_j$ for all 
$(i,j)$ such that $\tau <\sigma_j<\sigma_i$. Hence a successive application of the shift operator will eventually remove all but the dominant singular value(s), even if the iterates are normalized in between (without in-between normalization it of course removes all singular values). This effect is not guaranteed when simply deleting singular values below the threshold $\tau$ without shifting the others, as it would preserve the ratios of the remaining singular values, and might result in no change at all if $\tau$ is too small. But even if this \emph{hard threshold} was chosen such that at least one singular value is always removed, we found 
through experiments
that this does not work as well in combination with projections onto a subspace $\mathcal{M}$ as the soft thresholding.

The new matrix $Y$ in~\eqref{eq: shrinkage operator} will typically not lie in $\mathcal{M}$, nor will it be normalized w.r.t. the Frobenius norm. 
Thus, introducing the orthogonal projector (w.r.t. the Frobenius inner product) $\mathcal{P}_{\mathcal{M}}$ from $\mathbb{R}^{m \times n}$ onto $\mathcal{M}$, which is available given \emph{some} basis $M_1,\dots,M_d$ of $\mathcal{M}$, we consider the fixed point iteration:
\begin{equation}\label{eq: update formulas}
Y = \mathcal{S}_{\tau}(X), \quad
X = \frac{\mathcal{P}_{\mathcal{M}}(Y)}{\|\mathcal{P}_{\mathcal{M}}(Y)\|_F}.
\end{equation}

The projection $\mathcal{P}_{\mathcal{M}}$ is precomputed at the beginning and defined as $\mathbf{M}\mathbf{M}^T$ (only $\mathbf{M}$ is stored), 
where $\mathbf{M}$ is the orthogonal factor of the thin QR decomposition~\cite[Sec.~5]{golubbook4th} of the matrix 
\(
[\vec(M_1), \dots, \vec(M_d)] \in \mathbb{R}^{mn \times d}
\), where the (not necessarily low-rank) matrices $M_i$ span $\mathcal{M}$. 
It is used in the following way:
\begin{equation}\label{eq: projection}
\mathcal{P}_{\mathcal{M}}(Y) = \mat(\mathbf{M}\mathbf{M}^T\vec(Y)),
\end{equation}

To obtain the interpretation as BCD, we recall the fact that the shrinkage operation provides the unique solution to the strictly convex problem
\begin{equation}\label{eq:svtopt}
\minimize_Y \quad \tau \|Y\|_*^2 + \frac{1}{2}\|Y - X\|_F^2,
\end{equation}
see~\cite{Cai:SVT}. Intuitively,~\eqref{eq:svtopt} attempts to solve~\eqref{eq: considered problem for one matrix relaxed} in a neighborhood of the current guess $X$, while ignoring the constraints. The parameter $\tau$ controls the balance between small nuclear norm and locality: the larger it is the lower $\rank(Y)$ becomes, but $Y$ will be farther from $X$. Taking $\tau$ small has the opposite effect. The explicit form~\eqref{eq: shrinkage operator} quantifies this qualitative statement, as the distance between $X$ and $Y$ is calculated as
\[
\|X - Y \|_F^2 = \sum_{\sigma_i > \tau} \tau^2 + \sum_{\sigma_i \le \tau} \sigma_i^2.
\]

As a result, we see that the formulas~\eqref{eq: update formulas} represent the update rules when applying BCD to the problem
\begin{equation*}
\begin{alignedat}{2}
&\minimize_{X,Y} & \quad &\tau \| Y \|_* + \frac{1}{2}\|Y - X\|_F^2\\
&\subjectto & \quad &X \in \mathcal{M}, \text{ and } \ \| X \|_F = 1,
\end{alignedat}
\end{equation*}
which can be seen as a penalty approach to approximately solving~\eqref{eq: considered problem for one matrix relaxed}. 
%

\subsubsection*{Algorithm with adaptive shift $\tau$}

The considerations are summarized as Algorithm~\ref{alg:rankest}. At its core it is more or less analogous to the algorithm in~\cite{QuSunWright2014}. However, a new feature is that the parameter $\tau$ is chosen adaptively in every iteration.

\begin{algorithm}
\linespread{1.2}\selectfont
\DontPrintSemicolon
\KwIn{Orthogonal projection $\mathcal{P}_\mathcal{M}$ on $\mathcal{M}$; scalars $\delta, maxit, changeit > 0$, $\tau_{tol} \ge 0$; initial guess $X \in \mathcal{M}$ with $\|X\|_F=1$, 
initialize $r = n$.  
}
\KwOut{$X$, $Y$, and $r$, where $X = \mathcal{P}_{\mathcal{M}}(Y) \in \mathcal{M}$, and $Y$ is a matrix of low rank $r$ which is close to or in $\mathcal{M}$.}
\For{$it = 1, \dots, maxit$}{
		$X = U \Sigma^{} V^T$ \tcp*[1]{singular value decomposition}
		$s = \abs{\{ \sigma_i \vcentcolon \sigma_i > \tau_{tol}\}}$ \tcp*[1]{`noiseless' rank}\label{algline:essential rank}
		$X \leftarrow \mathcal{T}_s(X) / \| \mathcal{T}_s(X) \|_F$  \tcp*[1]{remove `noisy' singular values}\label{algline:essential rank 2}
		$\tau = \delta / \sqrt{s}$\tcp*[1]{set singular value shift}\label{algline:tau}
		$Y \leftarrow \mathcal{S}_\tau(X)$\tcp*[1]{shrinkage}
		$r \leftarrow \min(r,\rank(Y))$\tcp*[1]{new rank estimate}
		$X \leftarrow \mathcal{P}_{\mathcal{M}}(Y)$\tcp*[1]{projection onto subspace}
		$X \leftarrow X/\|X\|_F$\tcp*[1]{normalization}
	   Terminate if $r$ has not changed for $changeit$ iterations.\;
}
\caption{Phase I -- Rank estimation}\label{alg:rankest}
\end{algorithm}

The choice of the singular value shift $\tau$ in line \ref{algline:tau} is made to achieve faster progress, and motivated by the fact that a matrix $X$ of Frobenius norm 1 has at least one singular value below and one above $1/\sqrt{\rank(X)}$, unless all singular values are the same. Therefore, the choice of $\tau = 1/\sqrt{\rank(X)}$ would always remove at least one singular value, but can also remove all but the largest singular value in case the latter is very dominant. To make the shift more efficient in case that many small singular values are present, we rely instead on an effective rank $s$ which is obtained by discarding all singular values below a minimum threshold $\tau_{tol}$, considered as noise (line~\ref{algline:essential rank}). On the other hand, since the sought low-rank matrix may happen to have clustered dominant singular values, it is important to choose a less aggressive shift by multiplying with $0<\delta<1$. The choice of the parameters $\tau_{tol}$ and $\delta$ is heuristic, and should depend on the expected magnitudes and distribution of singular values for the low rank bases. In most of our experiments we used $\delta = 0.1$ and $\tau_{tol} = 10^{-3}$. With such a small value of $\tau_{tol}$ (compared to $\delta$) the truncation and normalization in line~\ref{algline:essential rank 2} are optional and make only a subtle difference.


Of course, one may consider alternative adaptive strategies such as $\tau \sim \| X \|_*/\rank(X)$ or $\tau \sim \sigma_{\rank(X)}$. Also $\tau_{tol}$ and $\delta$ may be set adaptively.

\subsubsection*{Choice of the initial guess}

Let us remark on the choice of the initial guess. As we shall see later and can be easily guessed, with randomly generated initial 
$X \in \mathcal{M}$, the output $r$ is not always the rank of the lowest-rank matrix in $\mathcal{M}$. 
A simple way to improve the rank estimate is to repeat Phase I with several initial matrices, and adopt the one that results in the smallest rank. 
Another ``good'' initial guess seems to be the analogue of that suggested in 
\cite{QuSunWright2014}, 
but this is intractable because the analogue here would be to initialize with a projection onto every possible rank-one matrix, 
and there are clearly infinitely many choices.
We therefore mainly employ random initial guesses, and 
leave the issue of finding a good initial guess an open problem.

\subsubsection*{The use as a rank estimator}

In our experiments we observed that Algorithm~\ref{alg:rankest} alone is often not capable of finding a low-rank matrix in the subspace. Typically the two subsequences for $Y$ and $X$ in~\eqref{eq: update formulas} produce two different numerical limits: $Y$ tends to a low-rank matrix which is close to, but not in the subspace $\mathcal{M}$; by contrast, the $X$ are always in the subspace, but are typically not observed to converge to a low-rank matrix. In fact, we can only have $X = Y$ in~\eqref{eq: update formulas} for rank-one matrices, in addition to the special case where 
the rank is higher but the singular values are all equal. 
Therefore, in the general case, further improvement will be necessary (phase II below). However, as it turns out, the rank found by the sequence of $Y$ provides a surprisingly good estimate also for the sought minimal rank in the subspace. 
Moreover, the obtained $X$ also provides the starting guess $X = \mathcal{P}_{\mathcal{M}}$ for further improvement in the second phase, described next. 
An analogous statement was proven in~\cite{QuSunWright2014full} for the sparse vector problem (which can be regarded as a special case of ours), but the analysis there assumes the existence of a sparse vector in a subspace of otherwise random vectors; here we do not have such (or related) assumptions.
In Section~\ref{subsec:convanal} we give some qualitative explanation for why we expect this process to obtain the correct rank, but 
we leave a detailed and rigorous analysis of Algorithm~\ref{alg:rankest} an open problem and call this preliminary procedure ``Rank estimation''.


\subsection{Phase II: Finding a matrix of estimated rank via alternating projections}\label{sec:algconv}

We now turn to the second phase, in which we find the matrix $X\in\mathcal{M}$ such that $\rank(X) = r$, the output rank of Phase I. Essentially, the soft singular value thresholding in Phase I is replaced by hard thresholding.

\subsubsection*{Alternating projections}

In Phase II of our algorithm we assume that we know a rank $r$ such that $\mathcal{M}$ contains a matrix of rank at most $r$. To find that matrix, we use the method of alternating projections between the Euclidean (Frobenius) unit sphere in the subspace $\mathcal{M}$ and the closed cone of matrices of rank at most $r$. The metric projection (in Frobenius norm) on this cone is given by the \emph{singular value truncation operator} $\mathcal{T}_r$ defined as
\[
 \mathcal{T}_r(X) = U \mathcal{T}_r(\Sigma)V^T,\quad \mathcal{T}_r(\Sigma) = \diag(\sigma_1,\dots,\sigma_r,0,\dots,0),
\]
where $X = U\Sigma V^T$ is an SVD of $X$ with $\Sigma = \diag(\sigma_1,\dots,\sigma_{\min(m,n)})$. The method of alternating projections hence reads
\begin{equation}\label{eq: update formulas alternating projection}
Y = \mathcal{T}_{r}(X), \quad
X = \frac{\mathcal{P}_{\mathcal{M}}(Y)}{\|\mathcal{P}_{\mathcal{M}}(Y)\|_F}.
\end{equation}
Conceptually, this iteration is the same as~\eqref{eq: update formulas} with the soft thresholding operator $\mathcal{S}_\tau$ replaced by the hard thresholding operator $\mathcal{T}_r$. Alternatively,~\eqref{eq: update formulas alternating projection} can be interpreted as employing BCD for the problem
\begin{equation*}
\begin{alignedat}{2}
&\minimize_{X,Y} & \quad &\|Y - X\|_F\\
&\subjectto & \quad &X \in \mathcal{M}, \ \| X \|_F = 1, \text{ and } \rank(Y) \le r.
\end{alignedat}
\end{equation*}

As a result, we obtain Algorithm~\ref{alg:getmats}.

\begin{algorithm}[h]
\DontPrintSemicolon
\KwIn{Orthogonal projection $\mathcal{P}_\mathcal{M}$ on $\mathcal{M}$; rank $r \ge 1$; scalars $maxit, tol > 0$; initial guess $X \in \mathcal{M}$.}
\KwOut{$X$, $Y$, where $X = \mathcal{P}_{\mathcal{M}}(Y) \in \mathcal{M}$, and $Y$ is a matrix of rank $r$, and hopefully $\| X - Y \|_F \le tol$.}
\While{$\| X - Y \|_F > tol$ {\upshape and} $it \le maxit$}{
	$X = U \Sigma V^T$ \tcp*[1]{singular value decomposition}
	$Y \leftarrow \mathcal{T}_r(X)$\label{alg:getmats:bestlowrankapprox} \tcp*[1]{best rank-$r$ approximation}
	$X \leftarrow \mathcal{P}_{\mathcal{M}}(Y)\label{alg:getmats:project}$\tcp*[1]{projection onto subspace}
	$X \leftarrow X/\|X\|_F$\label{alg:getmats:normalize}\tcp*[1]{normalization}
}
\caption{Phase II -- Alternating projections}\label{alg:getmats}
\end{algorithm} 

 The authors of the aforementioned reference~\cite{QuSunWright2014}, who proposed Algorithm~\ref{alg:rankest} for the sparse vector problem, also suggest a second phase (called ``rounding''), which is, however, vastly different from our Algorithm~\ref{alg:getmats}. It is based on linear programming and its natural matrix analogue would be to solve
\begin{equation}  \label{eq:minLP}
\begin{alignedat}{2}
&\minimize & \quad &\|X\|_*\\
&\subjectto & \quad &X \in \mathcal{M}, \text{ and } \tr(\tilde{X}^TX) = 1.
\end{alignedat}
\end{equation}
Here $\tilde{X}$ is the final matrix $X$ from Algorithm~\ref{alg:rankest}. In~\cite{QuSunWright2014,QuSunWright2014full} it is shown for the vector case that if $\tilde{X}$ is sufficiently close to a global solution of~\eqref{eq: considered problem for one matrix relaxed} then we can recover it exactly by solving~\eqref{eq:minLP}.

\subsubsection*{Comparison with a convex optimization solver}
Note that unlike our original problem~\eqref{eq: low-rank basis problem} or its nuclear norm relaxation~\eqref{eq: considered problem for one matrix relaxed},~\eqref{eq:minLP} is a convex optimization problem, since the constraints are now the linear constraint 
$\tr(\tilde{X}^TX) = 1$ along with the restriction in the subspace $X \in \mathcal{M}$. Nuclear norm minimization under linear constraints has been intensively considered in the literature, see~\cite{Cai:SVT,RechtFazelParillo2010} and references therein for seminal work. A natural approach is to attempt to solve \eqref{eq:minLP} 
by some convex optimization solver. 

In view of this, we conduct experiments to compare 
our algorithm with one where Phase II is replaced by the cvx optimization code~\cite{cvx}. 
For the test we used the default tolerance parameters in cvx. 
We vary $n$  while taking $m=n$ and generated 
the matrix subspace $\mathcal{M}$ so that the exact ranks are all equal to five. 
Here and throughout, the numerical experiments were carried out 
in MATLAB version R2014a on a desktop machine with an Intel Core i7 processor 
and 16GB RAM. 

The runtime and accuracy are shown in Table~\ref{tab:cvx}. 
Here the accuracy is measured as follows: letting $\hat X_i$ for $i=1,\ldots,d$ be the computed rank-1 matrices, we form the $mn\times d$ matrix $\mathcal{\hat X} = [\vec(\hat X_1),\ldots,\vec(\hat X_d)]$, and compute the error as the subspace angle~\cite[Sec.~6.4.3]{golubbook4th} between $\mathcal{\hat X}$ and the exact subspace $[\vec(M_1),\ldots,\vec(M_d)]$. 
Observe that 
while both algorithms provide (approximate) desired solutions, 
Algorithm~\ref{alg:heuristic greedy algorithm}   is more accurate, and much faster with the difference in speed increasing rapidly with the matrix size (this is for a rough comparison purpose: cvx is not very optimized for nuclear norm minimization).

\begin{table}[htbp]
  \begin{center}
    \caption{Comparison between Alg.~\ref{alg:heuristic greedy algorithm} and Phase II replaced with cvx.}
\label{tab:cvx}
\subfloat[Runtime(s).]{
\begin{tabular}{c|cccc}
$n$  &20&30&40 &50 \\\hline
Alg.~\ref{alg:heuristic greedy algorithm}  
&   1.4&  2.21&  3.38 & 4.97 \\
{\tt cvx}&  28.2&   186&   866 & 2960 \\
\end{tabular}
}
\subfloat[Error]{
\begin{tabular}{c|cccc}
 $n$ &20&30&40&50 \\\hline
Alg.~\ref{alg:heuristic greedy algorithm}  
 &2.9e-15&8.0e-15&9.5e-15& 2.1e-14\\
{\tt cvx} &6.4e-09&5.2e-10&5.7e-10 & 2.8e-10
\end{tabular}
}
  \end{center}
\end{table}

Another approach to solving \eqref{eq:minLP} is Uzawa's algorithm as described in~\cite{Cai:SVT}.
However, our experiments suggest that 
Uzawa's algorithm gives poor accuracy (in the order of magnitude $10^{-4}$), 
especially when the rank is not one. 

In view of these observations, in what follows we do not consider a general-purpose solver for convex optimization 
and focus on using Algorithm~\ref{alg:getmats} for Phase II.


\ignore{
\paragraph{Comparison with Uzawa's algorithm}\label{sec:secondphase}

{\color{red} REGARDING THIS SECTION
If the implementation of Uzawa is correct, we must conclude it does not work at all ! However, it would be nice
}
Therefore, also for our matrix problem, 
a  natural approach is to attempt to solve \eqref{eq:minLP} as an alternative to the proposed alternating fixed-rank projections. Note that now 
the constraint $\tr(\tilde{X}^TX) = 1$ rules out $X=0$ from the solution candidates, and the problem~\eqref{eq:minLP} is exactly in the form~\cite[eq.(3.1)]{Cai:SVT}, for which Uzawa's algorithm is effective as shown in~\cite{Cai:SVT}. 

{\color{red} Make a new plot for $d=1$}

The reason we have proposed
Algorithm~\ref{alg:getmats} is that 
we have observed that for our problem 
Uzawa's algorithm does not perform as well as Algorithm~\ref{alg:getmats}, especially when the rank is not one. Figure~\ref{fig:vsuzawa} shows a typical convergence plot of $\sigma_{r+1}(X_i)$ for $i=1,\ldots,d$ (here we took $m=n=10$ and $d=5$). 

\begin{figure}[htbp]
\begin{minipage}{.499\textwidth}
\centering
     \includegraphics[width=70mm]{../code/vsuzawa.pdf}
\end{minipage}
\begin{minipage}{.499\textwidth}
\centering
     \includegraphics[width=70mm]{../code/vsuzawarank2.pdf}
\end{minipage}
  \caption{
Typical convergence in phase II: our algorithm and Uzawa's.
Example with rank-1 (left) and rank-2 (right).
}
  \label{fig:vsuzawa}
\end{figure}

Another advantage of our approach is the lack of step size as a parameter. 
Uzawa's algorithm requires the thresholding parameter $\tau$ (which ours also require) and the step size $\delta$~\cite{Cai:SVT}, and in the above plots we have used $\delta=1$, which empirically gave near-best performance. 
}

\subsection{A greedy algorithm for the low-rank basis problem}\label{subsec: the full algorithm}

\subsubsection*{Restart for linear independence}

Algorithms~\ref{alg:rankest} and~\ref{alg:getmats} combined often finds a low-rank matrix in $\mathcal{M}$. To mimic the abstract greedy Algorithm~\ref{alg:greedy algorithm}, this can now be done consecutively for $\ell =1, \dots, d$. However, to ensure linear independence 
among the computed matrices $X_i$, 
a restart procedure may be necessary. 
After having calculated $X_1,\dots,X_{\ell-1}$ and ensured that they are linearly independent, the orthogonal projector $\mathcal{P}_{\ell-1}$ onto $\spann\{ X_1,\dots,X_{\ell-1}\}$ is calculated. While searching for $X_{\ell}$ the norm of $X_{\ell} - \mathcal{P}_{\ell-1}(X_{\ell})$ is monitored. If it becomes too small, it indicates (since $X_{\ell}$ is normalized) that $X_{\ell}$ is close to linearly dependent on the previously calculated matrices $X_1,\dots,X_{\ell-1}$. The process for $X_{\ell}$ is then randomly restarted in the orthogonal complement of $\spann\{X_1,\dots,X_{\ell-1}\}$ \emph{within} $\mathcal{M}$, which is the range of $\mathcal{Q}_{\ell-1} = \mathcal{P}_{\mathcal{M}} - \mathcal{P}_{\ell-1}$.

\begin{algorithm}[h!]
\linespread{1.2}\selectfont
\DontPrintSemicolon
\KwIn{Orthogonal projection $\mathcal{Q}_{\ell-1}$, matrix $X_{\ell}$ and tolerance $restarttol > 0$.}
\KwOut{Eventually replaced $X_{\ell}$.}
\If{$\| \mathcal{Q}_{\ell-1}(X_{\ell}) \|_F < restarttol$}{
    Replace $X_{\ell}$ by a random element in the range of $\mathcal{Q}_{\ell-1}$.\;
    $X_{\ell} \leftarrow X_{\ell} / \| X_{\ell} \|_F$\;
}
\caption{Restart for linear independence}\label{alg:restart}
\end{algorithm}

In our implementation, we do not apply a random matrix to $\mathcal{Q}_{\ell - 1}$ to obtain a random element in the range. Instead $\mathcal{Q}_{\ell - 1}$ is stored in the form of an orthonormal basis for which random coefficients are computed.
We note that restarting was seldom needed in our experiments, except for the image separation problem in Section~\ref{sec:images}. A qualitative explanation is that the space $\mathcal{Q}_\ell = \mathcal{P}_{\mathcal{M}} - \mathcal{P}_\ell$ from which we (randomly) obtain the next initial guess is rich in the components of matrices that we have not yet computed, thus it is unlikely that an iterate $X_{\ell}$ converges towards an element in $\mathcal{P}_\ell$. 

\subsubsection*{Final algorithm and discussion}

The algorithm we propose for the low-rank basis problem is outlined as Algorithm~\ref{alg:heuristic greedy algorithm}. Some remarks are in order.

\begin{algorithm}[h!]
\linespread{1.2}\selectfont
\DontPrintSemicolon
\KwIn{Basis $M_1,\dots M_d \in \mathbb{R}^{m \times n}$ for $\mathcal{M}$, and integer $restartit > 0$.}
\KwOut{Low-rank basis $X_1,\dots,X_d$ of $\mathcal{M}$.}
Assemble the projection $\mathcal{P}_{\mathcal{M}}$.\;
Set $\mathcal{Q}_0 = \mathcal{P}_{\mathcal{M}}$.\;
\For{$\ell = 1,\dots,d$}{
	Initialize normalized $X_\ell$ randomly in the range of $\mathcal{Q}_{\ell-1}$.\;
    Run Algorithm~\ref{alg:rankest} (Phase I) on $X_\ell$, but every $restartit$ iterations run Algorithm~\ref{alg:restart}.\;
    Run Algorithm~\ref{alg:getmats} (Phase II) on $X_\ell$, but every $restartit$ iterations run Algorithm~\ref{alg:restart}.\;
	Assemble the projection $\mathcal{P}_\ell$ on $\spann\{ X_1,\dots,X_\ell \}$ (ensure linear independence),\;
	Set $\mathcal{Q}_\ell = \mathcal{P}_{\mathcal{M}} - \mathcal{P}_\ell$.\;
}
\caption{Greedy algorithm for computing a low-rank basis}\label{alg:heuristic greedy algorithm}
\end{algorithm}

\begin{enumerate}
\item
The algorithm is not stated very precisely as the choice of many input parameters are not specified. We will specify at the beginning of Section~\ref{sec: experiments} the values we used for numerical experiments.
\item Analogously to~\eqref{eq: projection}, 
 the projections $\mathcal{P}_\ell$ are in practice obtained from a QR decomposition of $[\vec(X_1), \dots, \vec(X_\ell)]$.
\item
Of course, after Algorithm~\ref{alg:rankest} one can or should check whether it is necessary to run Algorithm~\ref{alg:getmats} at all. Recall that Algorithm~\ref{alg:rankest} provides a rank-estimate $r$ and a new matrix $X_\ell \in \mathcal{M}$. There is no need to run Algorithm~\ref{alg:getmats} in case $\rank(X_\ell) = r$ at that point. However, we observed that this seems to happen only when $r=1$ (see next subsection and Section~\ref{sec: synthetic averaged}), so in practice it is enough to check whether $r=1$.
\item
There is a principal difference between the greedy Algorithm~\ref{alg:heuristic greedy algorithm}, and the theoretical Algorithm~\ref{alg:greedy algorithm} regarding the monotonicity of the found ranks. For instance, there is no guarantee that the first matrix $X_1$ found by Algorithm~\ref{alg:heuristic greedy algorithm} will be of lowest possible rank in $\mathcal{M}$, and it will often not happen. 
It seems to depend on the starting value (recall the discussion on the initial guess in Section~\ref{sec:algrank}),  and an explanation may be that the soft thresholding procedure gets attracted by ``some nearest'' low-rank matrix, although a rigorous argument remains an open problem.
In any case, finding a matrix of lowest rank is NP-hard as we have already explained in the introduction. In conclusion, one should hence not be surprised if the algorithm produces linearly independent matrices $X_1,\dots,X_d$ for which the sequence $\rank(X_\ell)$ is not monotonically increasing. Nevertheless, at least in synthetic test, where the exact lowest-rank basis is exactly known and not too badly conditioned, the correct ranks are often recovered, albeit in a wrong order; see Figures~\ref{fig:typicalalg}, \ref{fig:typicalalgn20} and Section~\ref{sec: synthetic averaged}.
\item
It is interesting to note that in case the restart procedure is not activated in any of the loops (that is, the if-clause in Algorithm~\ref{alg:restart} always fails), one would have been able to find the matrices $X_1,\dots,X_d$ independently of each other, e.g., with a random orthogonal basis as a starting guess. In practice, we rarely observed that restart can be omitted, although it might still be a valuable idea to run the processes independently of each other, and monitor the linear independence of $X_1,\dots,X_d$ as a whole. If some of the $X_\ell$ start converging towards the same matrix, or become close to linearly dependent, a modified restart procedure will be required. A practical way for such a simultaneous restart is to use a QR decomposition with pivoting. It will provide a triangular matrix with decreasing diagonal entries, with too small entries indicating basis elements that should be restarted. Elements corresponding to sufficiently large diagonal elements in the QR decomposition can be kept. We initially tried this kind of method, an advantage being that the overall cost for restarts is typically lower. We found that it typically works well when all basis elements have the same rank.  However, the method performed very poorly in situations where the ranks of the low-rank basis significantly differ from each other. Nonetheless, this ``parallel'' strategy might be worth a second look in future work. 
\end{enumerate}

\subsection{Complexity and typical convergence plot}\label{subsec: typical convergence}

The main step in both Algorithms~\ref{alg:rankest} and~\ref{alg:getmats} is the computation of an SVD of an $m \times n$ matrix, which costs about $14mn^2+8n^3$ flops~\cite[Sec.~8.6]{golubbook4th}. This is done at most $2 maxit$ times (assuming the same in both algorithms) for $d$ matrices. 
Since we check the need for restarting only infrequently, 
the cost there is marginal. The overall worst-case complexity of our greedy Algorithm~\ref{alg:heuristic greedy algorithm} hence depends on the choice of $maxit$. In most of our experiments, the inner loops in Algorithms~\ref{alg:rankest} and~\ref{alg:getmats} terminated 
 according to the stopping criteria, long before $maxit$ iterations was reached.

Figure~\ref{fig:typicalalg} illustrates the typical convergence behavior of Algorithm~\ref{alg:heuristic greedy algorithm} for a problem with $m=20$, $n=10$, and $d=5$, 
constructed randomly using the MATLAB function {\tt randn},
and the exact ranks for a basis are $(1,2,3,4,5)$. The colors 
correspond to $\ell=1,\dots,5$: 
these are also indicated at the top of the figure.
For each $\ell$, the evolution of the $n=10$ singular values of $X_\ell$ are plotted during the iteration (always after projection on $\mathcal{M}$). The shaded areas show Phase I, the unshaded areas Phase II. In both phases we used $maxit = 1000$ and $restartit = 50$, moreover, Phase I was terminated when the rank did not change for $changeit = 50$ iterations (which appears to be still conservative), while Phase II was terminated when $X_\ell$ remained unchanged up to a tolerance of $10^{-14}$ in Frobenius norm. The number of SVDs is in principle equal to the number of single iterations, and governs the complexity, so it was used for the x-axis. Illustrating 
the fourth remark above, the ranks are not recovered in increasing order, but in the order $(1,5,3,2,4)$ (corresponding to the number of curves per $\ell$ not converging to zero). 
Repeated experiments suggest that all orderings of rank recovery is possible. 



\begin{figure}[htbp]
 \centering
     \includegraphics[width=\textwidth]{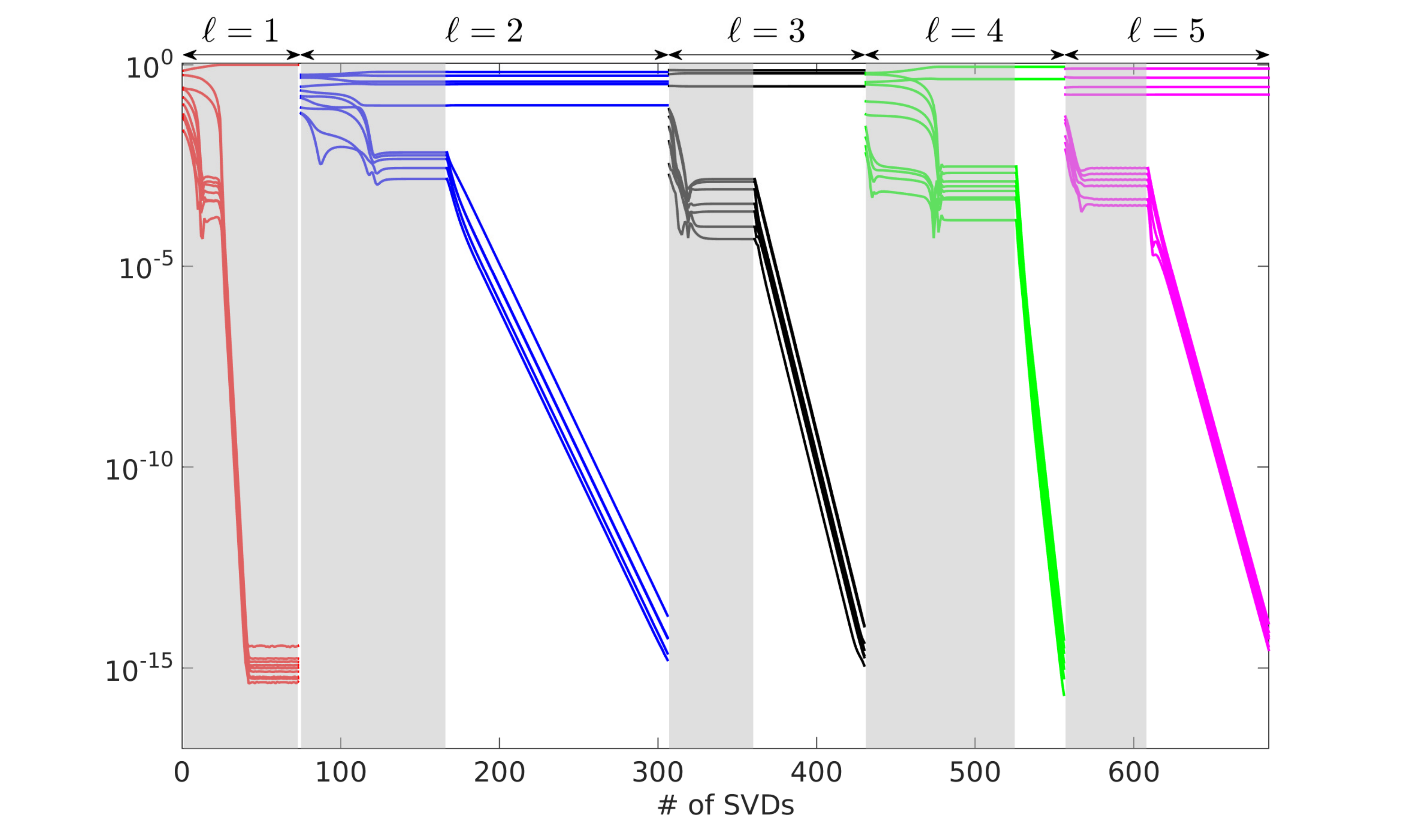}
  \caption{
Typical convergence of Algorithm~\ref{alg:heuristic greedy algorithm} for a randomly generated problem with $m=20, n=10,d=5$. Each color represents one outmost loop $\ell=1,\ldots, 5$. The shaded regions indicate Phase I, and the rest is Phase II. The exact ranks are $(1,2,3,4,5)$, but recovered in the order $(1,5,3,2,4)$.  
}
  \label{fig:typicalalg}
\end{figure}


Regarding the convergence of a single matrix, we make two observations in Figure~\ref{fig:typicalalg}. First, one can nicely see that in Phase I the singular values beyond $\sigma_{r_\ell}$ usually decrease until they stagnate at a certain plateau. The length of this plateau corresponds to the $50$ iterations we have waited until accepting an unchanged rank (before projection) as a correct guess. Except for the first (red) curve, which shows convergence towards a rank-one basis element, the error level of this plateau is too high to count as a low-rank matrix. Put differently, the (normalized) low-rank matrix $Y_\ell$ found by the shrinkage, on which the rank guess is based, is unacceptably far away from the subspace, illustrating the need for Phase II. 
Phase II seems unnecessary only when a rank-one matrix is targeted. 

Second, the convergence of $\sigma_{r+1},\ldots,\sigma_{n}$ towards zero in the second phase is typically linear, and tends to be faster if the limit is lower in rank. We give an 
 explanation for this observation in Section~\ref{sec:convanal}.

The fact that the matrices are obtained in somewhat random order can be problematic 
in some cases, such as the low-rank matrix problem where only one matrix of lowest rank is sought. 
One remedy  is to try multiple initial guesses for Phase I, and adopt the one that 
results in the lowest rank estimate $r$. Figure~\ref{fig:typicalalgrep} is a typical illustration when three 
initial guesses are attempted. The shaded regions represent Phase I repeated three times for each greedy step, and 
the ones that were adopted are shaded darkly. Observe that now the matrices are obtained in the correct rank order $(1,2,3,4,5)$. 

\begin{figure}[htbp]
\centering
     \includegraphics[width=\textwidth]{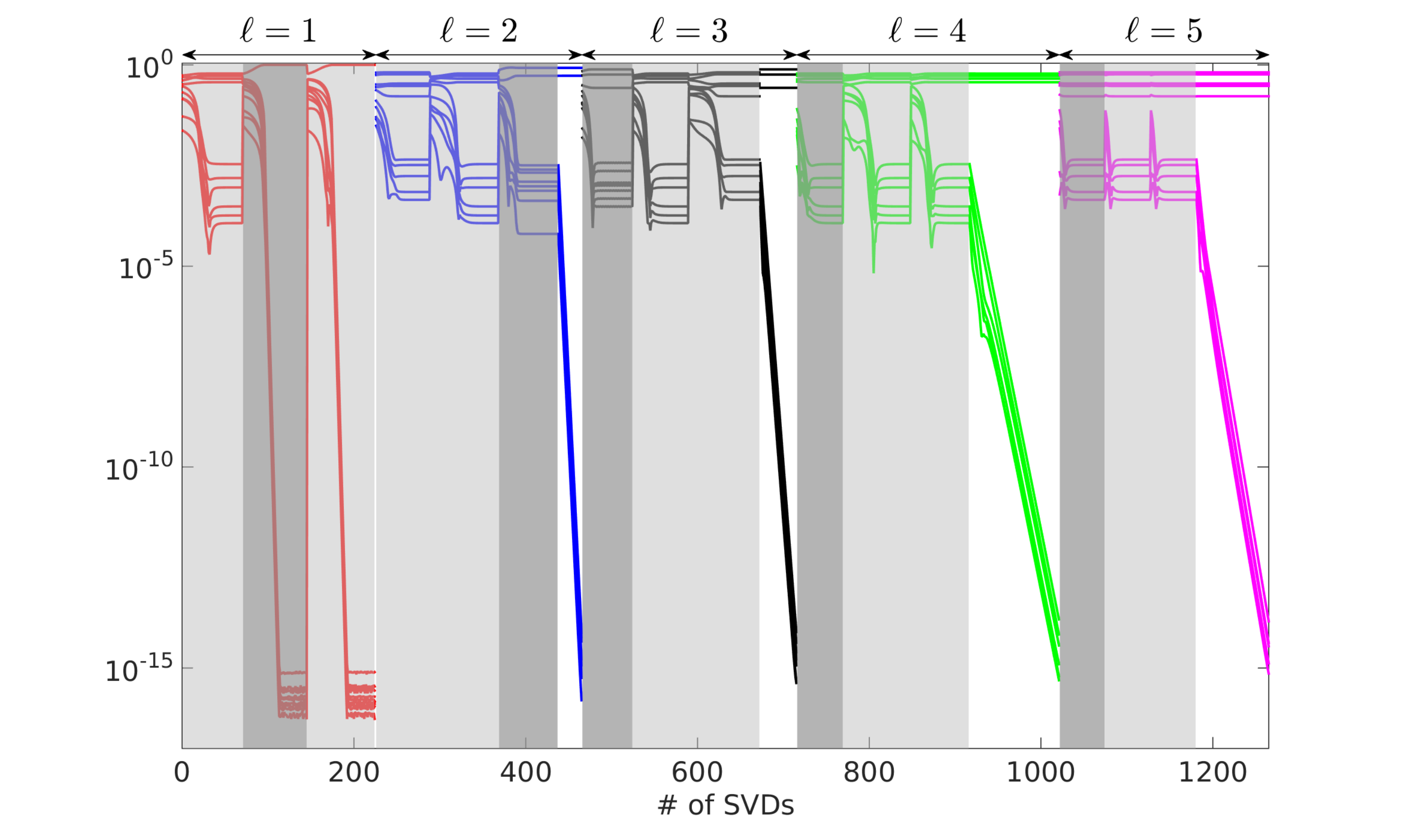}
  \caption{Same settings as Figure~\ref{fig:typicalalg}, but with 
three random initial guesses. Darkly shaded region represent the 
initial guess that was adopted. For instance, for the first matrix (red curve) the rank was estimated to be four in the first run of Phase I, and one in the second and third. The second was adopted, Phase II was not necessary. The final rank order is the correct $(1,2,3,4,5)$.
}
  \label{fig:typicalalgrep}
\end{figure}

Finally, Figure~\ref{fig:typicalalgn20} contains an outcome of the initial experiment (without multiple initial guesses) with square matrices of size $m = n =20$. The ranks are recovered in another ordering, namely 
$(5,1,2,3,4)$.
 One can see that the ratio of the number of iterations in Phases I and II is quite different, and the overall number of required SVDs is smaller. 
The convergence analysis in Section~\ref{sec:convanal}, 
which suggests a typical convergence factor $\sqrt{\frac{r}{n}}$, 
 gives a partial explanation. 
The main reason we give this third plot is the following interesting fact: the maximum rank a matrix in the generated subspace can have is $1+2+3+4+5 = 15$. Consequently, there seems to be a principal difference here from the previous example in that the subspace does not contain a full-rank matrix. 
This is perhaps part of why this problem seems somewhat easier in terms of the total number of iterations. 
Indeed, a close inspection of Figure~\ref{fig:typicalalgn20} reveals that 
for each $\ell$, 
we have five singular values below machine precision (recall that the plot shows the singular values after projection on the subspace).

\begin{figure}[htbp]
\centering
     \includegraphics[width=\textwidth]{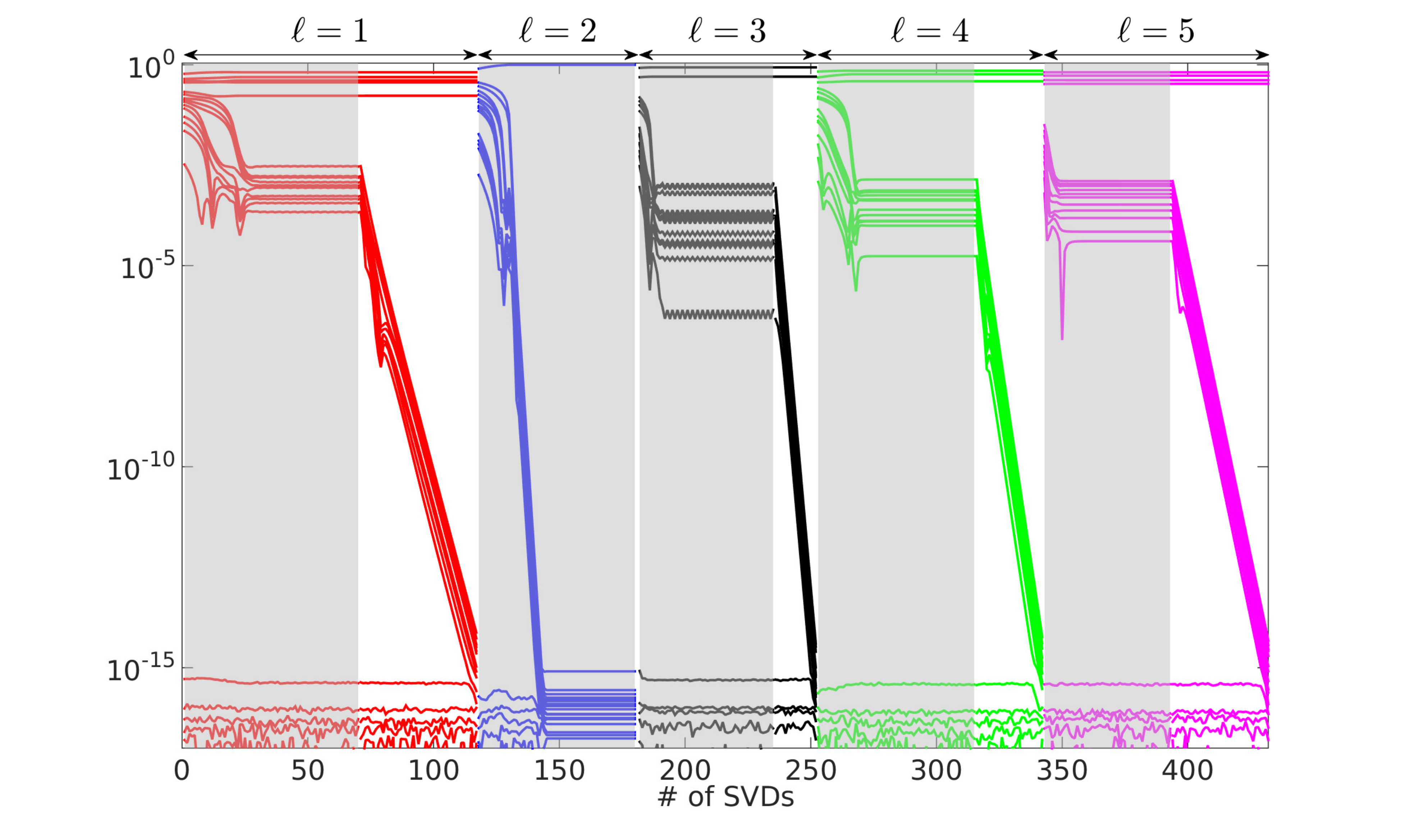}
     \caption{$m=n=20$. 
All matrices in the subspace are rank-deficient, and we observe that the number or SVDs is fewer. 
}     \label{fig:typicalalgn20}
\end{figure}

\section{Convergence analysis}\label{sec:convanal}

Given the hardness of the low-rank basis problem, the formulation of conditions for success or failure of Algorithm~\ref{alg:heuristic greedy algorithm} must be a challenging task. Not to mention the discontinuous nature of the restart procedure, a satisfying rigorous and global convergence result  remains a potentially interesting problem for future work.

%

Here we confine ourselves to a local convergence analysis of Phase II for a single matrix given a correct rank estimate $r$, which is an alternating projection method. We will consider the simplest case where at the point of interest the tangent space of the manifold of rank-$r$ matrices intersects trivially with the tangent space of the sphere in $\mathcal{M}$, which is arguably the simplest possible assumption when it comes to local convergence of the alternating projection method between smooth manifolds.

Regarding Phase I, we can at least note that if $X \in \mathcal{M}$ is a normalized rank-one matrix, then in a neighborhood of $X$ a single step of Algorithms~\ref{alg:rankest} and~\ref{alg:getmats}, respectively, will give the same result (this is also true for some similar choices of $\tau$ discussed above). Under the same assumptions as for Phase II this hence shows the local convergence of Phase I toward isolated normalized rank-one matrices in $\mathcal{M}$, 
see Corollary~\ref{cor: local convergence Phase I} below.
Since 
this is a local convergence analysis, it of course 
 does not fully explain the strong global performance of both Algorithms~\ref{alg:rankest} and~\ref{alg:getmats} in the rank-one case as seen in Figures~\ref{fig:typicalalg}--~\ref{fig:typicalalgn20} and Tables~\ref{tab:rand1} and~\ref{tab:rand5}. 

In general, using the shrinkage operator cannot be guaranteed to converge to a local solution. We have already noted that a matrix of rank larger than two cannot be a fixed point of the shrinkage operator (unless all nonzero singular values are the same). One could construct examples where in a fixed point of~\eqref{eq: update formulas} $X$ has the same rank as $Y$, but generically this seems extremely unlikely. Therefore, the convergence of Phase I is in general a less relevant question. The main problem is in which situations it provides a correct rank estimate, at least locally. 
Except for the rank-one case 
we have no precise arguments, but we give a qualitative explanation at the end of Section~\ref{subsec:convanal}.

\subsection{Local convergence of Phase II}\label{subsec:convanal}

Algorithm~\ref{alg:getmats} is nothing else than the method of alternating projections for finding a matrix in the intersection $\mathcal{B} \cap \mathcal{R}_r$ of the closed sets
\[
\mathcal{B} = \{ X \in \mathcal{M} \vcentcolon \| X \|_F = 1\} 
\]
and
\[
\mathcal{R}_{r} = \{ X \in \mathbb{R}^{m \times n} \vcentcolon \rank(X) \le r \}.
\]

The local convergence analysis of the alternating projection method for nonconvex closed sets has made substantial progress during the last years~\cite{AnderssonCarlsson2013,LewisLukeMalick2009,LewisMalick2008,NollRondepierre2015}, often with reference to problems involving low-rank matrices. In these papers 
one finds very abstract result in the language of variational analysis or differential geometry, but validating the required assumptions for specific situations can be complicated. The most recent work~\cite[Theorem~7.3]{drusvyatskiy2015transversality}, however, contains a very general and comprehensive result that the method of alternating projections is essentially locally convergent (in the sense of distance to the intersection) for two semialgebraic sets of which one is bounded, which is the case here. Moreover, convergence is R-linear and point-wise in case of an \emph{intrinsically transversal} intersection point~\cite[Theorem~6.1]{drusvyatskiy2015transversality}. A notable special case, for which the result has already been obtained in~\cite[Theorem~5.1]{AnderssonCarlsson2013}, is a \emph{clean} (or \emph{nontangential}) intersection point. As the assumption~\eqref{eq: nontangential intersection} made below implies such a clean intersection for the problem at hand, the subsequent Theorem~\ref{th: local convergence} follows. Nevertheless, we provide a direct and self-contained proof for the problem at hand based on elementary linear algebra, which may contribute to the understanding of alternating projection method when specifically used for low-rank constraints. 
In Remark~\ref{remark on condition} we discuss alternatives to the main assumption~\eqref{eq: nontangential intersection} in the context of the available literature in a bit more detail, and provide a sufficient condition for~\eqref{eq: nontangential intersection} in Lemma~\ref{lem: suff. condition for local convergence}.


To state the result, we assume that $X_* \in \mathcal{T}_r \cap \mathcal{B}$ has exactly rank $r$. By semi-continuity of rank, 
 all matrices in a neighborhood (in $\mathbb{R}^{m \times n}$) of $X$ have rank at least $r$. Therefore, we can locally regard Algorithm~\ref{alg:getmats} as an alternating projection between $\mathcal{B}$ and the smooth manifold of matrices of fixed rank $r$. Letting $X_* = U_*^{} \Sigma_*^{} V^T_*$ be a thin SVD of $X_*$ where $\Sigma_*$ contains positive singular values, the tangent space to that manifold at $X$ is~\cite{HelmkeShayman1995}
\begin{equation}\label{eq: tangent space of fixed rank}
T_{X_*} \mathcal{R}_r = \left\{[U_*^{}\ U_*^\perp] \begin{bmatrix} A & B \\ C & 0 \end{bmatrix} [V_*^{}\ V_*^\perp] \vcentcolon A \in \mathbb{R}^{r \times r}, \ B \in \mathbb{R}^{r \times (m-r)},\ C \in \mathbb{R}^{(n-r) \times r} \right\}.
\end{equation}
For our analysis we make the following genericity assumption:
\begin{equation}\label{eq: nontangential intersection}
T_{X_*} \mathcal{R}_r \cap T_{X_*} \mathcal{B} = \{ 0 \}.
\end{equation}
Here $T_{X_*} \mathcal{B}$ is the tangent space of $\mathcal{B}$, which is the orthogonal complement of $X_*$ within $\mathcal{M}$. Since $T_{X_*} \mathcal{R}_r$ contains $X_*$, ~\eqref{eq: nontangential intersection} is expressed in terms of $\mathcal{M}$ as
\begin{equation}\label{eq: equivalent non-tangential condition}
T_{X_*} \mathcal{R}_r \cap \mathcal{M} = \spann\{X_*\}.
\end{equation}
We remark that~\eqref{eq: nontangential intersection} ultimately implies that $X_*$ is an isolated point of $\mathcal{R}_r \cap \mathcal{B}$, so actually it then holds $\mathcal{R}_r \cap \mathcal{M} = \spann\{X_*\}$. 
Then we have the following result.

\begin{theorem}\label{th: local convergence}
Assume $X_* \in \mathcal{T}_r \cap \mathcal{B}$ has rank $r$, and that this rank is used in Algorithm~\ref{alg:getmats}. If~\eqref{eq: nontangential intersection} holds, then for $X \in \mathcal{B}$ close enough to $X_*$ 
the new iterate $X_{\text{new}}=\frac{ \mathcal{P}_{\mathcal{M}}(\mathcal{T}_r(X))}
{\| \mathcal{P}_{\mathcal{M}}(\mathcal{T}_r(X))\|_F}$ constructed by the algorithm is uniquely defined, and 
\begin{equation*}
  \label{eq:thmstate2}
 \frac{
\left\|X_{new}
 - X_* \right\|_F}{\| X - X_*\|_F} \le 
 \cos \theta + O(\| X - X_*\|_F^2),
\end{equation*}
where $\theta \in (0,\frac{\pi}{2}]$ is the subspace angle between $T_{X_*} \mathcal{B}$ and $T_{X_*} \mathcal{R}_r$, defined by 
\begin{equation*}  \label{eq:defangle}
\cos\theta = \max_{\substack{ X \in T_{X_*}\mathcal{B} \\ Y\in T_{X_*}\mathcal{R}_r} } \frac{\abs{\langle X, Y \rangle_F}}{\|X\|_F\|Y\|_F}. 
\end{equation*}
As a consequence, the iterates produced by Algorithm~\ref{alg:getmats} are uniquely defined and converge to $X_*$ at a linear rate for close enough starting values.
\end{theorem}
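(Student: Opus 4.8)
The plan is to reduce one step of Algorithm~\ref{alg:getmats} to the classical local analysis of alternating projections between two smooth manifolds near a common point, and to read off the contraction rate $\cos\theta$ from the principal angle between their tangent spaces. Write $W = T_{X_*}\mathcal{R}_r$ as in~\eqref{eq: tangent space of fixed rank} and $V = T_{X_*}\mathcal{B}$, and let $\mathcal{P}_V,\mathcal{P}_W$ be the corresponding orthogonal projectors; assumption~\eqref{eq: nontangential intersection} is exactly $V\cap W=\{0\}$, which forces the smallest principal angle $\theta$ to be positive and hence $\cos\theta<1$. First I would settle well-definedness: since $X_*$ has exactly rank $r$ we have $\sigma_r(X_*)>0=\sigma_{r+1}(X_*)$, so $\sigma_r>\sigma_{r+1}$ on a neighborhood and $\mathcal{T}_r$ is there single-valued and smooth; moreover $\mathcal{P}_{\mathcal{M}}(X_*)=X_*\neq O$ guarantees $\mathcal{P}_{\mathcal{M}}(\mathcal{T}_r(X))\neq O$ nearby, so the normalization defining $X_{\text{new}}$ makes sense. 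This yields the uniqueness assertion.

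The analytic core is to establish two first-order expansions. Writing a nearby matrix in block form relative to the fixed factors $[U_*\ U_*^\perp]$ and $[V_*\ V_*^\perp]$ and invoking standard SVD / best-rank-$r$ perturbation estimates, I would show that the metric projection onto $\mathcal{R}_r$ is differentiable at $X_*$ with derivative $\mathcal{P}_W$, that is $\mathcal{T}_r(X)-X_*=\mathcal{P}_W(X-X_*)+O(\|X-X_*\|_F^2)$. An elementary computation with the map $Y\mapsto \mathcal{P}_{\mathcal{M}}(Y)/\|\mathcal{P}_{\mathcal{M}}(Y)\|_F$ gives the analogous expansion $\mathcal{P}_{\mathcal{M}}(Y)/\|\mathcal{P}_{\mathcal{M}}(Y)\|_F-X_*=\mathcal{P}_V(Y-X_*)+O(\|Y-X_*\|_F^2)$ for the projection onto the sphere $\mathcal{B}$. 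Composing the two and collecting the quadratic terms gives $X_{\text{new}}-X_*=\mathcal{P}_V\mathcal{P}_W(X-X_*)+O(\|X-X_*\|_F^2)$.

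The decisive observation is that the error already lives in $\mathcal{B}$: because $X,X_*\in\mathcal{B}$, the chord $X-X_*$ splits into a tangential part $u\in V$ and a normal part of size $O(\|X-X_*\|_F^2)$, so $X-X_*$ may be replaced by $u\in V$ up to another quadratic term. For $u\in V$ the definition of $\cos\theta$ gives $\|\mathcal{P}_W u\|_F\le\cos\theta\,\|u\|_F$ (this is precisely the largest singular value of $\mathcal{P}_W$ restricted to $V$), and since $\mathcal{P}_V$ is nonexpansive we obtain $\|\mathcal{P}_V\mathcal{P}_W u\|_F\le\cos\theta\,\|u\|_F$. This produces the stated one-step bound. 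The final assertion then follows by iteration: for $X$ in a small enough ball the one-step factor is at most some $c<1$, so every step contracts, the iterates remain in the ball, and $\|X_k-X_*\|_F\to0$ at a linear rate.

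The main obstacle is the quantitative control of the quadratic remainder in the first expansion, namely proving that the best rank-$r$ approximation is $C^1$ at $X_*$ with derivative $\mathcal{P}_W$ and a genuinely uniform $O(\|\cdot\|_F^2)$ error on a neighborhood, using that $X_*$ lies on the smooth fixed-rank stratum; the separation $\sigma_r>\sigma_{r+1}=0$ is what makes this estimate available. A secondary point is to verify that the accumulated quadratic errors do not destroy the geometric decay over the whole iteration, which is automatic once the one-step rate is bounded away from $1$.
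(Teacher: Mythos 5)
Your proposal is correct, and it reaches the same one-step estimate by a more abstract, modular route than the paper. The paper's proof is a hands-on computation: it writes the tangential error $E=X-X_*+O(\|X-X_*\|_F^2)$ in block form with respect to $[U_*\ U_*^\perp]$ and $[V_*\ V_*^\perp]$, builds explicit near-orthogonal factors $F,G$ that block-diagonalize $X$ up to $O(\|E\|_F^2)$, invokes Wedin-type singular subspace perturbation to show that $\mathcal{T}_r$ deletes the $(2,2)$-block $D$ up to second order, and then uses the transversality assumption in the quantitative form $\|D\|_F\ge \sin\theta\,\|E\|_F$. Your argument is the coordinate-free version of exactly this: the quantity $\sqrt{\|E\|_F^2-\|D\|_F^2}$ appearing in the paper is precisely $\|\mathcal{P}_{T_{X_*}\mathcal{R}_r}E\|_F$, and the bound $\|D\|_F\ge\sin\theta\|E\|_F$ is precisely your $\|\mathcal{P}_W u\|_F\le\cos\theta\|u\|_F$ for $u\in V$. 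The step you correctly flag as the main obstacle --- that the best rank-$r$ approximation is differentiable at $X_*$ with derivative $\mathcal{P}_{T_{X_*}\mathcal{R}_r}$ and a uniform quadratic remainder --- is not something you can wave through: it is exactly what the paper's $F,G$ construction plus the perturbation bound $O(\|E\|_2^2/\mathrm{gap})$ establishes, and any complete write-up of your route must supply an equivalent argument (the gap $\sigma_r(X_*)>0=\sigma_{r+1}(X_*)$ and the condition number of $\Sigma_*$ enter the hidden constant). What the paper's explicit computation additionally buys is the reuse of the same expansion to explain the behavior of the soft-thresholding operator in Phase I; what yours buys is brevity and a transparent link to the general Lewis--Malick / Andersson--Carlsson theory of alternating projections, which the paper deliberately avoided in favor of a self-contained argument. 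One cosmetic discrepancy: your expansion yields the ratio bound $\cos\theta+O(\|X-X_*\|_F)$ rather than the stated $\cos\theta+O(\|X-X_*\|_F^2)$; since the paper's own division in its display for the ratio only supports an $O(\|X-X_*\|_F)$ correction as well, this does not affect the conclusion of local linear convergence at asymptotic rate $\cos\theta$.
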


In accordance with our observations in Section~\ref{subsec: typical convergence}, we also have a result for Phase I in the rank-one case.

\begin{corollary}\label{cor: local convergence Phase I}
If $r=1$, the statement of Theorem~\ref{th: local convergence} also holds for Algorithm~\ref{alg:rankest}. In fact, both algorithms produce the same iterates in some neighborhood of $X_*$.
\end{corollary}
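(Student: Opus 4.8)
The plan is to reduce the corollary to a single-step statement: I would show that on a sufficiently small neighborhood of $X_*$ the update map of Algorithm~\ref{alg:rankest} coincides \emph{exactly} with the Phase~II update map of Algorithm~\ref{alg:getmats} for $r=1$. Granting this, the convergence claim follows by a short bootstrapping argument, because Theorem~\ref{th: local convergence} already guarantees that a small enough ball around $X_*$ is invariant under, and linearly contracted by, the Phase~II map; on that ball the two algorithms then generate identical iterate sequences, so Phase~I inherits the conclusion of Theorem~\ref{th: local convergence} verbatim, including the rate $\cos\theta$.

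To establish the single-step equivalence, first I would use that $X_*$ has rank one with $\|X_*\|_F=1$, so its unique nonzero singular value equals $1$ and is simple. Weyl's perturbation bound $\abs{\sigma_i(X)-\sigma_i(X_*)}\le\|X-X_*\|_F$ then shows that for $X\in\mathcal{B}$ close enough to $X_*$ one has $\sigma_1(X)$ near $1$ while $\sigma_i(X)<\tau_{tol}$ for every $i\ge2$ (here the fixed choice $\tau_{tol}>0$ is used). Consequently the effective rank in line~\ref{algline:essential rank} is $s=1$, the leading singular triple $(\sigma_1,u_1,v_1)$ is uniquely determined, and the truncation–normalization in line~\ref{algline:essential rank 2} replaces $X$ by the exactly rank-one unit-norm matrix $u_1v_1^T$.

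The decisive computation is then immediate. With $s=1$ the shift becomes $\tau=\delta$, and applying the shrinkage operator to the rank-one unit matrix merely rescales it, $\mathcal{S}_\delta(u_1v_1^T)=(1-\delta)u_1v_1^T$ for $\delta<1$, whereas the Phase~II truncation gives $\mathcal{T}_1(X)=\sigma_1u_1v_1^T$. Both are \emph{positive} scalar multiples of the same matrix $u_1v_1^T$. Since $\mathcal{P}_{\mathcal{M}}$ is linear, $\mathcal{P}_{\mathcal{M}}(u_1v_1^T)\to X_*\neq0$ guarantees $\mathcal{P}_{\mathcal{M}}(u_1v_1^T)\neq0$ for $X$ near $X_*$, and the final normalization cancels any positive factor, so both algorithms return the identical next iterate $\mathcal{P}_{\mathcal{M}}(u_1v_1^T)/\|\mathcal{P}_{\mathcal{M}}(u_1v_1^T)\|_F$. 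This is exactly the single-step equivalence, from which the result follows.

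I expect the only real obstacle to be the bookkeeping that makes the neighborhood uniform: one must fix a single radius $\rho$ small enough that simultaneously (i) $\sigma_i(X)<\tau_{tol}$ for $i\ge2$, so that $s=1$; (ii) $\mathcal{P}_{\mathcal{M}}(u_1v_1^T)\neq0$, so both maps are well defined; and (iii) the ball of radius $\rho$ is mapped into itself, which is delivered by the contraction estimate of Theorem~\ref{th: local convergence} once $\rho$ is small enough that $\cos\theta+O(\rho^2)<1$. It is worth stressing that $\tau_{tol}>0$ is essential: with $\tau_{tol}=0$ the effective rank $s$ would equal $\rank(X)$, typically larger than one after projection onto $\mathcal{M}$, and the shrinkage step would no longer collapse to a simple rescaling, so the clean identity with Phase~II would break down.
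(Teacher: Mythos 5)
Your main argument is correct and is essentially the paper's own: one shows that near $X_*$ a single step of Algorithm~\ref{alg:rankest} and of Algorithm~\ref{alg:getmats} with $r=1$ produce the same iterate, because both reduce $X$ to a positive scalar multiple of $u_1^{}v_1^T$ before the (linear) projection and the normalization, which cancel the scalar; the convergence statement then follows from Theorem~\ref{th: local convergence} by invariance of a small ball. Your one genuine error is the closing claim that $\tau_{tol}>0$ is \emph{essential} and that the identity with Phase~II ``breaks down'' when $\tau_{tol}=0$. It does not: with $\tau_{tol}=0$ one has $s=\rank(X)\le\min(m,n)$, hence $\tau=\delta/\sqrt{s}\ge\delta/\sqrt{\min(m,n)}>0$, and for $\|X-X_*\|_F$ below this fixed bound Weyl's inequality gives $\sigma_i(X)\le\tau$ for all $i\ge 2$ while $\sigma_1(X)>\tau$, so $\mathcal{S}_\tau(X)=(\sigma_1-\tau)u_1^{}v_1^T$ is again a positive multiple of $u_1^{}v_1^T$ and the two updates still coincide. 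This uniform lower bound on the shift $\tau$ is precisely the mechanism the paper isolates in the remark following the corollary; your route through $s=1$ via $\tau_{tol}>0$ is a valid special case but misattributes the reason the equivalence holds.
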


Here it is essential that the value of shift $\tau$ is bounded below by a fixed fraction of the Frobenius norm of $X$, as it is the case for in Algorithm~\ref{alg:rankest}, a lower bound being $\delta / \sqrt{\min(m,n)}$.

\begin{theopargself}
\begin{proof}[of Theorem~\ref{th: local convergence}]
Since $X$ and $X_*$ are in $\mathcal{B}$, we can write
\[
X - X_* = E + O(\| X - X_* \|_F^2)
\]
with $E \in T_{X_*}\mathcal{B}$. We partition the principal error $E$ as
\begin{equation}  \label{eq:EE}
E = [U_*^{}\ U_*^\perp] \begin{bmatrix} A & B \\ C & D \end{bmatrix} [V_*^{}\ V_*^\perp]^T.  
\end{equation}
Of course, $\| E \|_F^2 = \|A\|_F^2 + \| B \|_F^2 + \|C\|_F^2 + \|D\|_F^2$, and due to~\eqref{eq: tangent space of fixed rank}, our assumption implies
\begin{equation}\label{eq: fraction}
\| D \|_F^2 \ge 
\sin^2 \theta \cdot \| E \|_F^2.
\end{equation}

Since $X_*$ has rank $r$, it follows that all matrices in some neighborhood have a unique best rank-$r$ approximation (by perturbation arguments for the singular values). In this neighborhood $X_{\text{new}}$ is uniquely defined. To relate $\|E\|$ to $\|\mathcal{T}_r(X) - X_*\|$
%
%
%
%
we consider the two matrices
\[
F = 
    \begin{bmatrix}
      I&  C\Sigma_*^{-1} \\
      -C\Sigma_*^{-1}& I
    \end{bmatrix}[U_*^{}\ U_*^\perp]^T,
\]
and
\[
G =
     [V_*^{}\ V_*^\perp]\begin{bmatrix}
      I& -\Sigma_*^{-1}B\\
      \Sigma_*^{-1}B & I
    \end{bmatrix},
\]
both of which are orthogonal up to $O(\|E\|_F^2)$:
\[
\|F^TF-I\|_F =O(\|E\|_F^2), \quad \|G^TG-I\|_F=O(\|E\|_F^2).\footnote{Here and in the following, the error constant behind $O(\|E\|_F)$ depends mainly on the condition of $\Sigma_*$, which can be very large, but is fixed in this type of local analysis.}
\]
Therefore, denoting by $\tilde F,\tilde G$ the orthogonal polar factors of $F,G$, respectively, we also have\footnote{From a polar decomposition $Z^T=UP$ one gets $Z^TZ-I = (Z^T-U)(P+I)U^T$, and since the singular
  values of $(P+I)U^T$ are all at least 1, it follows that $\|Z-U^T\|_F\leq
  \|Z^TZ-I\|_F$.}
\begin{equation}\label{eq: relation to polar factor}
 F = \tilde{F} + O(\|E\|^2), \quad G = \tilde{G} + O(\|E\|_F^2).
\end{equation}
One now verifies that
\[
  \tilde F X \tilde G = F X G +O(\| E \|_F^2) = 
  \begin{bmatrix}
    \Sigma_*+A & 0\\0&D
  \end{bmatrix}+O(\| E \|_F^2),
\]
or, since $\tilde{F}$ and $\tilde{G}$ are orthogonal,
\[
 X = \tilde{F}^T \begin{bmatrix}
    \Sigma_*+A & 0\\0&D
    \end{bmatrix}
    \tilde{G}^T
 + O(\|E\|_F^2).
\]
For $E$ small enough, the best rank-$r$ approximation of the principal part is obtained by deleting $D$. Hence, 
\begin{align}
\mathcal{T}_r(X) &=  \mathcal{T}_r\left( \tilde{F}^T\begin{bmatrix}
    \Sigma_*+A & 0\\0&D
    \end{bmatrix}\tilde{G}^T \right) + O(\| E\|_F^2) \nonumber\\
    &= \tilde{F}^T\begin{bmatrix}
    \Sigma_*+A & 0\\0& 0
    \end{bmatrix}\tilde{G}^T + O(\| E\|_F^2).\label{eq:trx}
\end{align}
To get the last equality we have used 
results from  matrix perturbation theory~\cite{wedin72} (see also \cite[Sec.V.4]{stewart-sun:1990}), which shows that under the perturbation $O(\|E\|_F^2)$, 
the singular subspace corresponding to the $r$ largest singular values 
of $X$ gets perturbed by $O(\frac{\|E\|_2^2}{gap})$ where $gap$ is the smallest distance between the singular values of $\Sigma_*+A$ and those of $D$. 
For $\|E\|_F$ sufficiently small such that $\|E\|_F=o(\sigma_{\min}(\Sigma_*))$, 
this bound is $O(\|E\|_2^2)$. 
Together with the fact that the perturbation in the singular values is bounded 
also by $O(\|E\|_2^2)$ (since the condition number of singular values is always 1), we obtain the final equality above. 

Therefore, taking also~\eqref{eq: relation to polar factor} into account, 
we obtain 
\begin{align*}
\| \mathcal{T}_r(X) - X_* \|_F  &= \|\tilde{F} \mathcal{T}_r(X) \tilde{G} - F X_* G \|_F + O(\|E\|_F^2) \\
&= \left\| \begin{bmatrix} A + \Sigma_* & 0 \\ 0 & 0 \end{bmatrix} - \begin{bmatrix} \Sigma_* & - B \\ -C & C\Sigma_*^{-1}B \end{bmatrix} \right\|_F + O(\|E\|_F^2)\\
&= \left\| \begin{bmatrix} A & B \\ C & 0 \end{bmatrix} \right\|_F + O(\|E\|_F^2).
\end{align*}
Here we used $O(\|C\Sigma_*^{-1}B\|_F^2) = O(\|E\|_F^2)$, which holds 
since $\|\Sigma_*^{-1}\|_2$ can be regarded as a constant that does not depend on $\|E\|_F$. 
Since $O(\|E\|) = O(\| X - X_* \|)$, we arrive at
\begin{equation}\label{eq: almost final estimate}
\begin{aligned}
\frac{\| \mathcal{T}_r(X) - X_* \|_F}{\| X - X_*\|_F} &= \frac{\sqrt{\| E \|_F^2 - \| D \|^2_F} + O(\|X - X_* \|_F^2)}{\| E \|_F + O(\| X - X_* \|_F^2)} \\ &\le \sqrt{1 - \sin^2\theta} + O(\| X - X_* \|_F^2)\\
 &= \cos\theta + O(\| X - X_* \|_F^2),
\end{aligned}
\end{equation}
where we have used~\eqref{eq: fraction}. 
Since $X_* \in \mathcal{M}$, it now follows that
\begin{equation}\label{eq: estimate after projection}
\| P_{\mathcal{M}}(\mathcal{T}_r(X)) - X_* \|_F \le \| \mathcal{T}_r(X) - X_* \|_F \le \cos\theta \| X - X_*\|_F + O(\| X - X_* \|_F^3).
 \end{equation}
Finally, we consider the normalization step.
Recalling $\|X_*\|_F=1$, 
by a simple geometric argument on the unit sphere 
we obtain 
\begin{equation}\label{eq: normalization perturbation bound}
\left\| \frac{Y}{ \|Y\|_F} - X_* \right\|_F \leq \frac{1}{\cos\phi}\|Y - X_*\|_F,
\end{equation}
where $\phi\in[0,\frac{\pi}{2}]$ such that $\sin\phi = \|Y-X_*\|_F$. By Taylor expansion,  $\frac{1}{\sqrt{1-\xi^2}} =  1 + O(\xi^2)$. Substituting $\xi = \sin \phi$ and 
$Y = \mathcal{P}_{\mathcal{M}}(\mathcal{T}_r(X))$ in~\eqref{eq: normalization perturbation bound} gives 
\[
\| X_{\text{new}} - X_* \|_F \le \| P_{\mathcal{M}}(\mathcal{T}_r(X)) - X_* \|_F + O(\| P_{\mathcal{M}}(\mathcal{T}_r(X)) - X_* \|_F^3).
\]
Using~\eqref{eq: estimate after projection}, we arrive at
\[
\| X_{\text{new}} - X_* \|_F \le \cos \theta \| X - X_* \|_F + O(\|X - X_*\|_F^3),
\]
completing the proof.
\qed\end{proof}
\end{theopargself}

From Theorem~\ref{th: local convergence} we can obtain 
a rough estimate for the convergence factor that we can expect to observe in practice. 
Consider the ``generic'' case where 
the error term $E$ in~\eqref{eq:EE} 
is randomly distributed, that is, each element is of comparable absolute value. 
Then we have $\|D\|_F^2 \approx \frac{(n-r)^2}{n^2}\|E\|_F^2$, and plugging this into~\eqref{eq: almost final estimate} gives 
$\frac{\| \mathcal{T}_r(X) - X_* \|_F}{\| X - X_*\|_F}\leq \frac{\sqrt{2nr+r^2}}{n}+O(\| X - X_* \|_F^2)$. 
This suggests that we typically expect a convergence factor $\approx O(\sqrt{\frac{r}{n}})$. This estimate reflects the experiments quite well; see Section~\ref{sec:exconv}.

The above proof provides some insight into the behavior of 
Algorithm~\ref{alg:rankest} in Phase I. In this case $\mathcal{T}_r(X)$ in~\eqref{eq:trx} is replaced by $\mathcal{S}_\tau(X)$. Provided again that we start with a matrix $X$ close to $X_*$ so that $\|D\|_2\leq \tau$, the operation $\mathcal{S}_\tau(X)$ again removes the $D$ term, emphasizing the components towards $X_*$ just like in Phase II as shown above. However, now the $\Sigma_*+A$ term is also affected, and thus Phase I stagnates where the thresholding effect in $\Sigma_*+A$ is balanced with the error terms that come in from the projection $\mathcal{P}_{\mathcal{M}}$. Then the rank estimate $r$ is of correct rank $\rank(X_*)$, but neither $X$ nor $Y$ in Algorithm~\ref{alg:rankest} is close to $X_*$; reflecting the remark at the end of Section~\ref{sec:algrank}. 

\begin{remark}\label{remark on condition}
The conditions~\eqref{eq: nontangential intersection}, resp.~\eqref{eq: equivalent non-tangential condition}, allow for a 
 simple proof but of course impose some restrictions, most obviously $d = \dim (\mathcal{M}) \le (m-r)(n-r) + 1$. In a seminal attempt, Lewis and Malick~\cite{LewisMalick2008} obtained the local convergence of the method of alternating projections between two smooth submanifolds $\mathcal{M}$ and $\mathcal{N}$ of $\mathbb{R}^n$ towards some $X_* \in \mathcal{M} \cap \mathcal{N}$ under the condition that $T_{X_*} \mathcal{M} + T_{X_*} \mathcal{N} = \mathbb{R}^n$ (transversality). This allows for a non-trivial intersection, but imposes lower bounds on the dimensions, in our case $d \ge (m-r)(n-r) + 1$. Andersson and Carlsson~\cite{AnderssonCarlsson2013} relaxed the condition to $T_{X_*}( \mathcal{M} \cap \mathcal{N}) = T_{X_*} \mathcal{M} \cap T_{X_*}\mathcal{N}$ under the assumption that $\mathcal{M} \cap \mathcal{N}$ is a $C^2$ manifold. This does not impose restriction on the dimensions, and contains~\eqref{eq: equivalent non-tangential condition} as a special cases. Still these conditions can fail in the situation at hand ($\mathcal{M}$ a subspace of $\mathcal{R}^{m \times n}$, $\mathcal{N} = \mathcal{R}_r$), for instance when $\mathcal{M} = T_{X_*}\mathcal{R}_r$, to mention one counter-example. As already mentioned, the recent results of Drusvyatskiy, Ioffe and Lewis~\cite{drusvyatskiy2015transversality} subsumes the previous results under the more general condition of intrinsic transversality. Another recent work, by Noll and Rondepierre \cite{NollRondepierre2015}, contains local convergence for alternating projections under very weak but abstract assumptions, which do not involve tangential conditions in first place. It may be that their result applies to our setting, but we have not been able to validate this.
\end{remark}

We conclude with a sufficient condition for~\eqref{eq: nontangential intersection} which might be useful in some very structured cases; see Proposition~\ref{prop: relation of full rank factor matrices to tangential interestcion} in the appendix for an example. We denote by $\ran (X)$ and $\ran(X^T)$ the column and row space of a matrix $X$, respectively.

\begin{lemma}\label{lem: suff. condition for local convergence}
Let $X_* \in \mathcal{M}$ have rank $r$. Assume there exists a $(d-1)$-dimensional subspace $\tilde{\mathcal{M}} \subseteq \mathcal{M}$ complementary to $\spann\{ X_*\}$ with the following property: for every $\tilde{X} \in \tilde{\mathcal{M}}$ it holds $
\ran(X_*)\cap \ran(\tilde X)=0$ and $\ran(X_*^T)\cap \ran(\tilde X^T)=0$. 
Then~\eqref{eq: nontangential intersection} holds.
\end{lemma}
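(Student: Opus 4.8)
The plan is to verify the equivalent formulation~\eqref{eq: equivalent non-tangential condition}, namely $T_{X_*}\mathcal{R}_r \cap \mathcal{M} = \spann\{X_*\}$. Since $X_* = U_* \Sigma_* V_*^T$ lies in both $\mathcal{M}$ and $T_{X_*}\mathcal{R}_r$ (it corresponds to the block choice $A = \Sigma_*$, $B = C = 0$ in~\eqref{eq: tangent space of fixed rank}), the inclusion $\spann\{X_*\} \subseteq T_{X_*}\mathcal{R}_r \cap \mathcal{M}$ is automatic, and only the reverse inclusion needs work. To obtain it I would take an arbitrary $X \in T_{X_*}\mathcal{R}_r \cap \mathcal{M}$ and use the decomposition $\mathcal{M} = \spann\{X_*\} \oplus \tilde{\mathcal{M}}$ to write $X = \alpha X_* + \tilde X$ with $\tilde X \in \tilde{\mathcal{M}}$. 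Because $T_{X_*}\mathcal{R}_r$ is a linear subspace containing $X_*$, the difference $\tilde X = X - \alpha X_*$ again lies in $T_{X_*}\mathcal{R}_r$. Thus the claim reduces to showing that every $\tilde X \in \tilde{\mathcal{M}} \cap T_{X_*}\mathcal{R}_r$ vanishes, at which point the hypothesis on $\tilde{\mathcal{M}}$ becomes applicable to the single matrix $\tilde X$.

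The second step is to rewrite membership in the tangent space in projector form. From~\eqref{eq: tangent space of fixed rank} a matrix $Z$ belongs to $T_{X_*}\mathcal{R}_r$ precisely when its $(U_*^\perp, V_*^\perp)$ block vanishes, i.e. $(I - U_* U_*^T)\, Z\, (I - V_* V_*^T) = 0$. Writing $P = U_* U_*^T$ and $Q = V_* V_*^T$ for the orthogonal projectors onto $\ran(U_*) = \ran(X_*)$ and $\ran(V_*) = \ran(X_*^T)$, the condition reads $(I-P)\tilde X (I-Q) = 0$. I would then apply it columnwise: for every $w$ the vector $\tilde X (I-Q) w$ lies in $\ran(P) = \ran(X_*)$ and also, trivially, in $\ran(\tilde X)$; since $\ran(X_*) \cap \ran(\tilde X) = 0$ by hypothesis, this vector is zero for all $w$, whence $\tilde X (I-Q) = 0$, i.e. $\tilde X = \tilde X Q$. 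Transposing gives $\ran(\tilde X^T) \subseteq \ran(V_*) = \ran(X_*^T)$. Combining this containment with the second hypothesis $\ran(X_*^T) \cap \ran(\tilde X^T) = 0$ forces $\ran(\tilde X^T) = 0$, hence $\tilde X = 0$, completing the reduction and therefore the proof. (Both range hypotheses are used, one to collapse the column side and one the row side; a symmetric route starting from the transposed identity $(I-Q)\tilde X^T (I-P) = 0$ works equally well.)

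I do not expect a serious technical obstacle here---the argument is purely linear-algebraic. The only genuine insight is the reformulation of tangent-space membership as the two-sided projector identity $(I-P)\tilde X(I-Q)=0$ together with the observation that the two range-intersection hypotheses can be played against each other: the column condition, via the projector identity, pins $\ran(\tilde X^T)$ inside $\ran(X_*^T)$, and the row condition then annihilates it. Care is needed only in the bookkeeping that $\ran(X_*) = \ran(U_*)$ and $\ran(X_*^T) = \ran(V_*)$ (valid because $\Sigma_*$ is invertible), and in the initial reduction that isolates a single $\tilde X \in \tilde{\mathcal{M}}$ so that the pointwise hypothesis applies.
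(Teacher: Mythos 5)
Your proof is correct and follows essentially the same route as the paper: reduce via the splitting $\mathcal{M} = \spann\{X_*\}\oplus\tilde{\mathcal{M}}$ to a single $\tilde X\in\tilde{\mathcal{M}}$, express tangent-space membership as the vanishing of the $(U_*^\perp,V_*^\perp)$ block, use the column-range hypothesis to get $\tilde X(I-V_*V_*^T)=0$, and then the row-range hypothesis to conclude $\tilde X=0$. The only differences are notational (the paper's $P,Q$ are the complementary projectors, and it phrases the conclusion as $\beta=0$ rather than $\tilde X=0$).
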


\begin{proof}
Consider $X = \alpha X_* + \beta \tilde{X} \in \mathcal{M}$ with $\tilde{X} \in \tilde{\mathcal{M}}$. Let $P$ and $Q$ denote the orthogonal projections on $\ran(X_*)^\bot$ and $\ran(X_*^T)^\bot$, respectively. Then $X \in T_{X_*} \mathcal{R}_r$ if and only if
\[
0 = P^{} X Q^T = \beta P^{} \tilde{X} Q^T.
\]
It holds $P^{} \tilde{X} Q^T \neq 0$. To see this we note that $P^{} \tilde{X} Q^T = 0$ would mean $\ran(\tilde{X} Q^T) \subseteq \ran(X_*)$, which by assumption implies $\tilde{X} Q^T = 0$. But then $\ran(\tilde{X}^T) \subseteq \ran(X_*^T)$, a contradiction. Hence $X \in T_{X_*}\mathcal{R}_r$ if and only if $\beta = 0$, which proves the equivalent condition~\eqref{eq: equivalent non-tangential condition}.
\qed\end{proof}

\ignore{
\subsection{Pathological cases}
The above argument establishes convergence under the generic
assumption that the error term $\epsilon$ has elements that are not
concentrated in the off-diagonal blocks $\epsilon_{12},\epsilon_{21}$.

Here we argue that even when this is not the case we can still expect
convergence.  The discussion above shows that $\alpha$ can be close to
$1$ when the matrix $U_0^T\mat(\epsilon)V_0.$ in~\eqref{eq:Yiconv} has
nonzero elements concentrated in the off-diagonal blocks. Suppose that
$U_0^T\mat(\epsilon)V_0=
\begin{bmatrix}
  0&\epsilon_{12}\\\epsilon_{21}&0
\end{bmatrix}, $ in which case the estimate~\eqref{eq:epconv0} gives
$\alpha\approx 1$.  However, then we have
\[
\mat(\widehat Y_i)= [U\ U^\perp]\begin{bmatrix}
  I\\
  -\epsilon_{21}\Sigma_*^{-1}
\end{bmatrix}\Sigma_*
\begin{bmatrix}
  I& -\epsilon_{12}\Sigma_*^{-1}
\end{bmatrix}[V\ V^\perp]^T.
\]
In most cases, the projection $Y_i:=X(X^TY_i)$ reduces the elements in
the direction of $U^\perp$ and $V^\perp$. To understand this note that
if after the projection the $\epsilon_{21}$ term retains its Frobenius
norm then this means that the matrix $U^\perp V^T$ was completely
contained in the original matrix subspace. This can happen, but
unlikely: In our experiments we never observed a case where the
$\sigma_{r+1}(X_i)$ stagnates during Algorithm~\ref{alg:getmats}, once
the ranks are obtained correctly. Furthermore, if it did happen, then
once a basis containing $U^\perp V^T$ is computed, the restart phase
will attenuate its components to facilitate convergence.
}





\section{Experiments}\label{sec: experiments}

Unless stated otherwise, the standard parameters in the subsequent experiments were $\tau_{tol} = 10^{-3}$, $\delta = 0.1$ and $changeit = 50$ in Alg~\ref{alg:rankest}, and
$maxit = 1000$ and $restartit = 50$ for both Phase I and Phase II in Algorithm~\ref{alg:heuristic greedy algorithm}. The typical termination tolerance in Phase II was $tol = 10^{-14}$. The restart tolerance $restarttol$ in Alg.~\ref{alg:restart} was set to $10^{-3}$ but was almost never activated. 

\subsection{Synthetic averaged examples}\label{sec: synthetic averaged}

We fix $m = n = 20$, $d=5$, and a set of ranks $(r_1,\dots,r_d)$. We then randomly generate $d$ random matrices $M_1,\dots,M_d$ of corresponding rank by forming random matrices $U_\ell \in \mathbb{R}^{m \times r_\ell}$ and $V_\ell \in \mathbb{R}^{n \times r_\ell}$ 
with orthonormal columns, obtained from the QR factorization of random Gaussian matrices using MATLAB's ${\tt randn}$
and setting $M_\ell = U_\ell^{} V_\ell^T$. To check the average rate of success of Algorithm~\ref{alg:heuristic greedy algorithm}, we run it 100 times and calculate
\begin{itemize}
\item
the average sum of ranks $\sum_{\ell = 1}^d \rank(Y_\ell)$ found by Phase I of the algorithm,
\item
the average truncation error $\left(\sum_{\ell = 1}^d \| X_\ell - \mathcal{T}_{r_\ell}(X_\ell) \|_F^2\right)^{1/2}$ after Phase I,
\item
the average truncation error $\left(\sum_{\ell = 1}^d \| X_\ell - \mathcal{T}_{r_\ell}(X_\ell) \|_F^2\right)^{1/2}$ after Phase II, 
\item the average iteration count ($\#$ of SVDs computed) in each Phase. 
\end{itemize}
Table~\ref{tab:rand1} shows the results for some specific choices of ranks. 
Phase II  was always terminated using $tol = 10^{-14}$, and never took  the maximum $1000$ iterations. 
From Table~\ref{tab:rand1} we see that the ranks are sometimes estimated incorrectly, although this does not necessarily tarnish the final outcome. 

{\renewcommand{\arraystretch}{1.2}
\begin{table}[htbp]
\begin{center}
\caption{Synthetic results, random initial guess.}\label{tab:rand1}
\begin{tabular}{c|c|c|c}
\text{exact ranks} & \text{av. sum(ranks)} & \text{av. Phase I err (iter)} & \text{av. Phase II err  (iter)}\\
\hline
(  1 ,  1 ,  1 ,  1 ,  1 )&    5.05 & 2.59e-14 (55.7) & 7.03e-15  (0.4) \\
(  2 ,  2 ,  2 ,  2 ,  2 )&   10.02 & 4.04e-03 (58.4) & 1.04e-14 (9.11) \\
(  1 ,  2 ,  3 ,  4 ,  5 )&   15.05 & 6.20e-03 (60.3) & 1.38e-14 (15.8) \\
(  5 ,  5 ,  5 , 10 , 10 )&   35.42 & 1.27e-02 (64.9) & 9.37e-14 (50.1) \\
(  5 ,  5 , 10 , 10 , 15 )&   44.59 & 2.14e-02 (66.6) & 3.96e-05 (107) \\
\end{tabular}
\end{center}
\end{table}

A simple way to improve the rank estimate is to repeat Phase I with several initial matrices, and adopt the one that results in the smallest rank. Table~\ref{tab:rand5} shows the results obtained in this way using five random initial guesses. 

\begin{table}[htbp]
\begin{center}
\caption{Synthetic results, random initial guess from subspace repeated 5 times. 
}\label{tab:rand5}
\begin{tabular}{c|c|c|c}
\text{exact ranks} & \text{av. sum(ranks)} & \text{av. Phase I err (iter)} & \text{av. Phase II err  (iter)}\\
\hline
(  1 ,  1 ,  1 ,  1 ,  1 )&    5.00 & 6.77e-15  (709) & 6.75e-15 (0.4)\\
(  2 ,  2 ,  2 ,  2 ,  2 )&   10.00 & 4.04e-03  (393) & 9.57e-15 (9.0)\\
(  1 ,  2 ,  3 ,  4 ,  5 )&   15.00 & 5.82e-03  (390) & 1.37e-14 (18.5) \\
(  5 ,  5 ,  5 , 10 , 10 )&   35.00 & 1.23e-02  (550) & 3.07e-14 (55.8) \\
(  5 ,  5 , 10 , 10 , 15 )&   44.20 & 2.06e-02  (829) & 8.96e-06 (227)\\
\end{tabular}
\end{center}
\end{table}
}



We observe that the problem becomes more difficult when the ranks vary widely. 
As mentioned in Section~\ref{sec:algrank}, 
choosing the initial guesses as in~\cite{QuSunWright2014} 
also worked fine, but not evidently better than random initial guesses as in Table~\ref{tab:rand5}.
From the first rows in both tables we validate once again that for the rank-one case, Phase II is not really necessary -- Phase I is recovering a rank-one basis reliably.

\subsubsection{Comparison with tensor CP algorithm}\label{sec:comparetensor}



As we describe in Appendix~\ref{sec: reuction to tensors}, if the 
subspace is spanned by rank-one matrices, then the CP decomposition (if successfully computed; the rank is a required input) of a tensor with slices $M_k$, where $M_1,\dots,M_d$ is any basis of $\mathcal{M}$, provides a desired rank-one basis. Here we compare our algorithm with the CP-based approach. Specifically, we compare with the method \texttt{cpd} in Tensorlab~\cite{sorbertensorlab} with the exact decomposition rank (the dimension $d$ of $\mathcal{M}$) as input. 
By default, this method is based on alternating least-squares with initial guess obtained by an attempt of simultaneous diagonalization. 
When applied to a rank-one basis problem, \texttt{cpd} often gives an accurate CP decomposition with no ALS iteration.

As seen from the tables above, given a rank-one basis problem, our Algorithm~\ref{alg:heuristic greedy algorithm} will typically terminate after Phase I. On the other hand, since we assume a rank-one basis to exist (otherwise the CP approach is not necessarily meaningful for finding a subspace basis), we can also use the alternating projection algorithm from Phase II with rank one directly from random initializations. 
In summary, we obtain three error curves: one for tensorlab, one for soft thresholding (Phase I) and one for alternating projection (Phase II). 
The errors are computed as in the experiments in Section~\ref{sec:algconv} via the subspace angle. 

 We also present the runtime to show that our algorithms are not hopelessly slow in the special rank-one case. 
Just running Phase II results in an algorithm faster than Algorithm~\ref{alg:heuristic greedy algorithm}, but it is still slower than \texttt{cpd}.
Note that while Tensorlab is a highly tuned toolbox, we did not try too hard to optimize our code regarding the choice of parameters and memory consumption. More importantly, unlike \texttt{cpd} our algorithm does not require the rank $r$ and is applicable even when $r>1$.


\paragraph{Growing matrix size $n$}\label{sec:matsize}
We first vary the matrix size $n$, fixing the other parameters. 
The runtime and accuracy are shown in Figure~\ref{fig:tensorex1}. 
We observe that if the CP rank is known, the CP-based algorithm is both fast and accurate. 



\begin{figure}[htbp]
\begin{minipage}{.499\textwidth}
\centering
     \includegraphics[width=66mm]{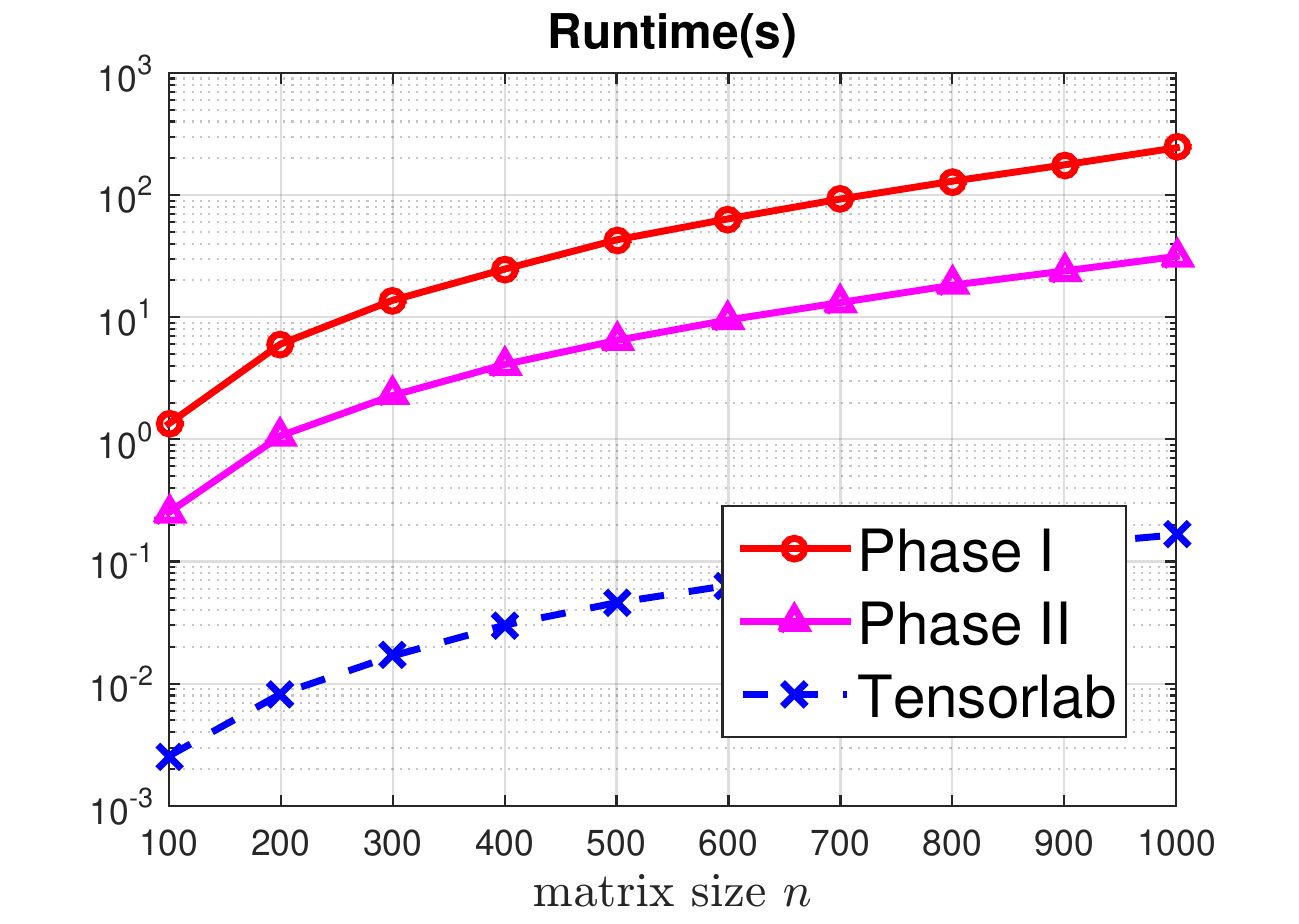}
\end{minipage}
\begin{minipage}{.499\textwidth}
\centering
    \includegraphics[width=66mm]{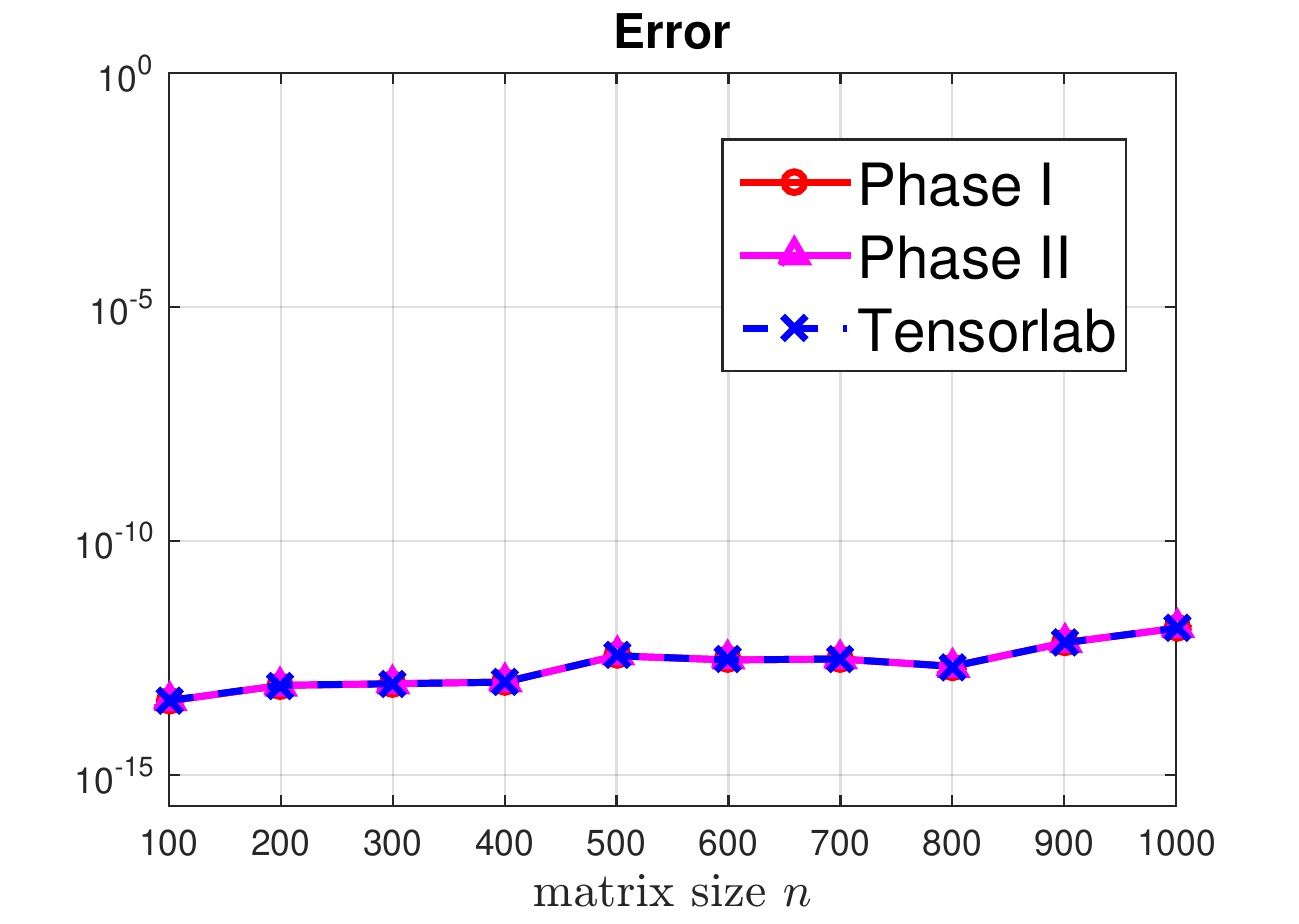}
\end{minipage}
  \caption{
Rank-1 basis matrices $r=1$, 
fixed $d=10$, varying $m=n$ between $50$ and $500$. 
The accuracy is not identical but nearly the same. 
Tensorlab performs well. 
}
  \label{fig:tensorex1}
\end{figure}

\paragraph{Growing dimension $d$}\label{sec:dimsize}
We next vary the dimension $d$, in particular allowing it to 
exceed $n$ (but not $n^2$). In this case, linear dependencies among the left factors $\mathbf{a}_\ell$ and right factors $\mathbf{b}_\ell$, respectively, of a rank-one basis $\mathbf{a}_1^{} \mathbf{b}_1^T, \dots, \mathbf{a}_d^{} \mathbf{b}_d^T$ must necessarily occur. It is known that in this scenario obtaining an exact CP decomposition via simultaneous diagonalization, as in part attempted by \texttt{cpd}, becomes a much more subtle problem, see the references given in Section~\ref{sec: simultaneous diagonalization}. And indeed, we observe that for $d>n$ the accuracy of Tensorlab deteriorates, while our methods do not. The runtime and accuracy for $n=10$ are shown in Figure~\ref{fig:tensorex2}. 
However, 
 this effect was less pronounced for larger $m=n$.


\begin{figure}[htbp]
\begin{minipage}{.499\textwidth}
\centering
     \includegraphics[width=66mm]{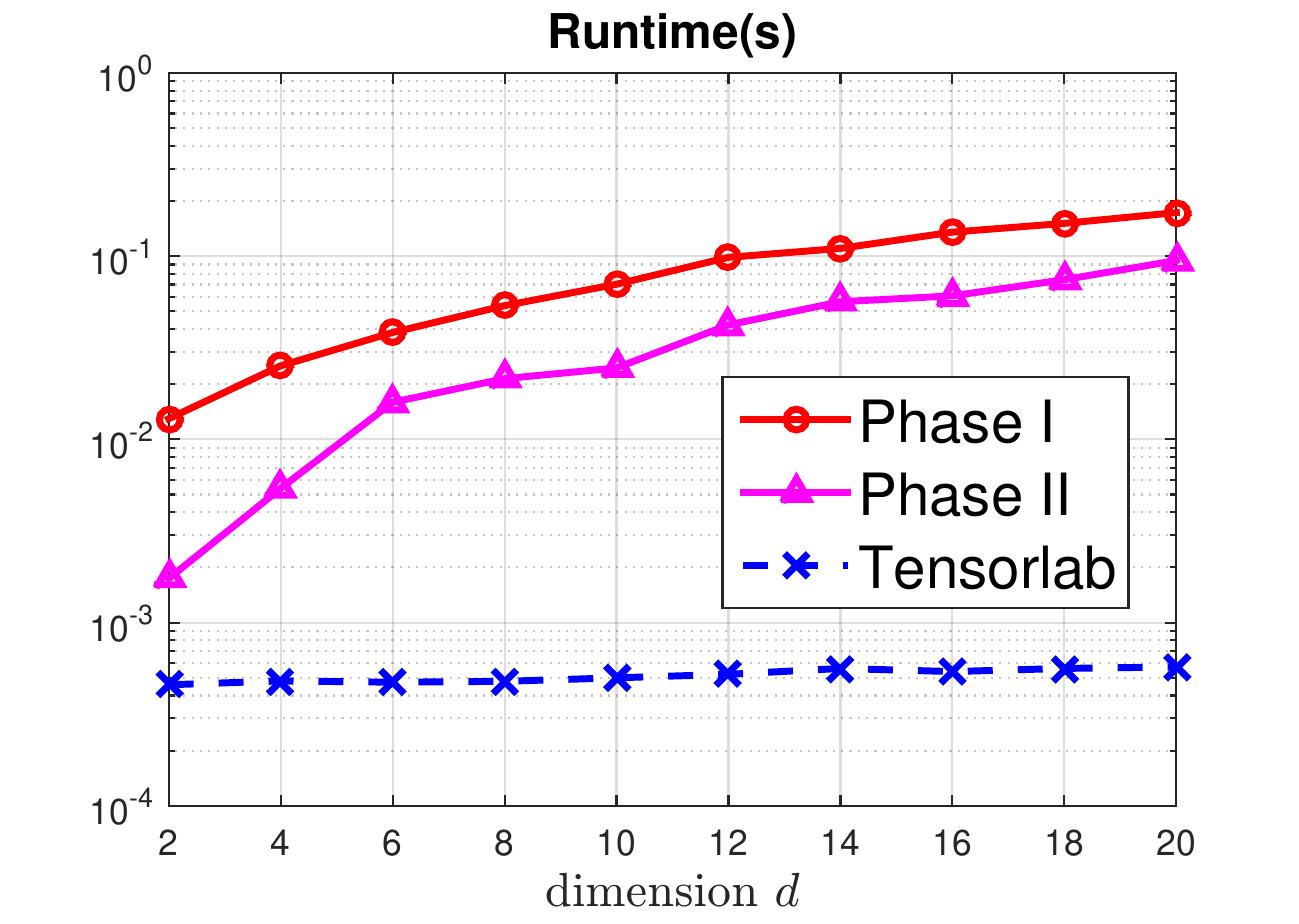}
\end{minipage}
\begin{minipage}{.499\textwidth}
\centering
     \includegraphics[width=66mm]{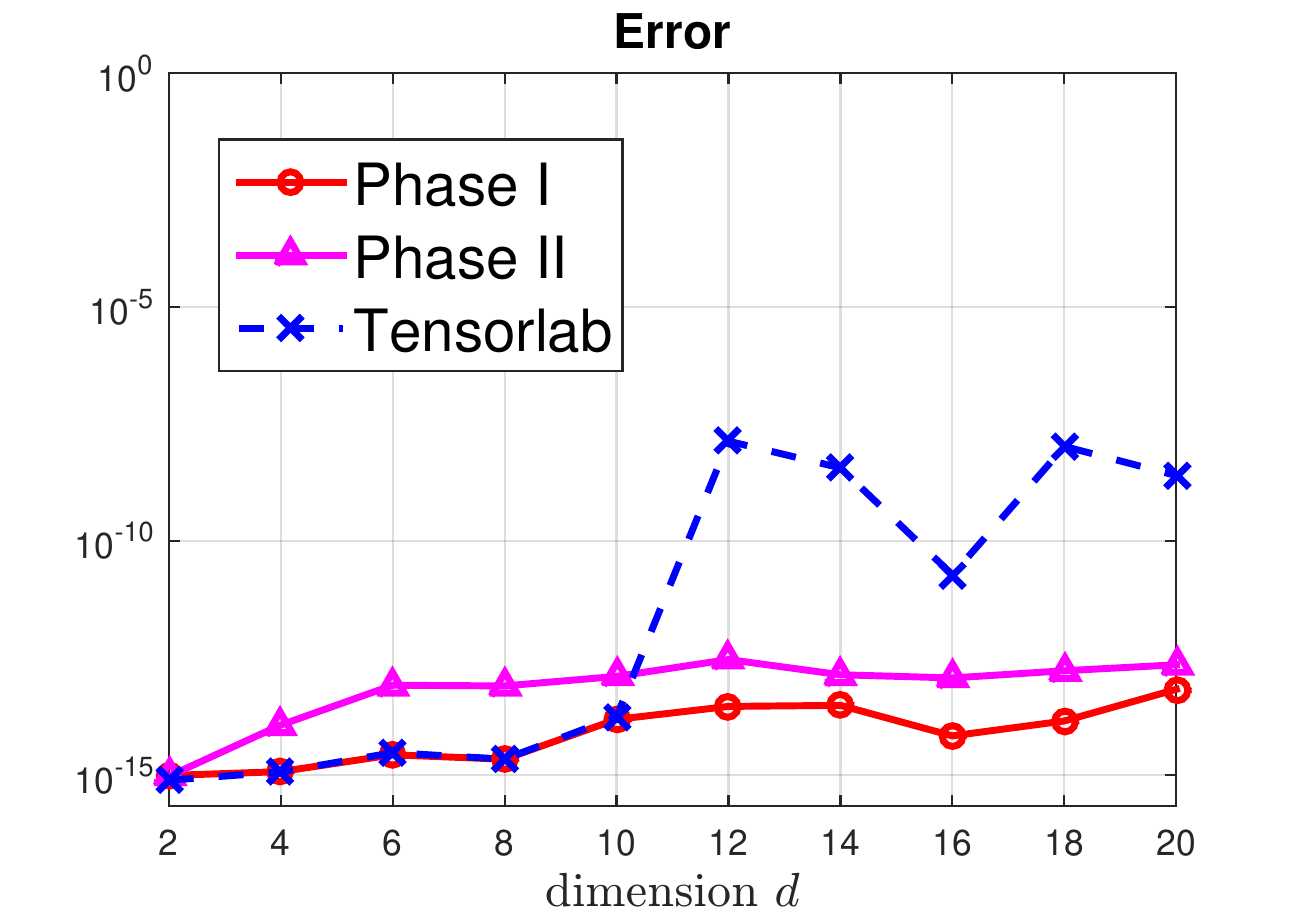}
\end{minipage}
  \caption{
Rank-1 basis matrices $r=1$, 
Fixed $m = n=10$, varying $d$ between $2$ and $20$. 
Our Algorithm gives better accuracy  when $d> n$. 
}
  \label{fig:tensorex2}
\end{figure}

We conclude that the CP-based algorithm is recommended if 
(i) the basis matrices are known to be of rank one, and 
(ii) the dimension is lower than $\min(m,n)$. 
Our algorithm, on the other hand, is slower, but substantially different in that it does not need to know that a rank-one basis exist, but will detect (in Phase I) and recover it automatically. Also it seems indifferent to linear dependent factors in the CP model.




\subsection{Quality of the convergence estimate}\label{sec:exconv}
In Section~\ref{sec:convanal} we analyzed the convergence of Algorithm~\ref{alg:getmats} in Phase II and showed that, when the error term is randomly distributed the convergence factor would be roughly $\sqrt{\frac{r}{n}}$, recall 
the remark after
Theorem~\ref{th: local convergence}. 
Here we illustrate with experiments how accurate this estimate is. 

In Figure~\ref{fig:convex} we plot a typical convergence of 
$\| \mathcal{T}_r(X) - X \|_F$
 as the iterations proceed. We generated test problems 
(randomly as before)
varying $n$ on the left ($n=10,100$) and varying $r$ on the right ($r=2,10$). 
The dashed lines indicate the convergence estimate $(\sqrt{\frac{r}{n}})^{\ell}$ after the $\ell$th iteration. Observe that in both cases the estimated convergence factors reflect the actual convergence reasonably well, and 
in particular we verify the qualitative tendency that 
 (i) for fixed matrix size $n$, 
the convergence is slower for larger rank $r$, and 
(ii) for fixed rank $r$, the convergence is faster for larger $n$.

\begin{figure}[htbp]
\begin{minipage}{.499\textwidth}
\centering
     \includegraphics[width=65mm]{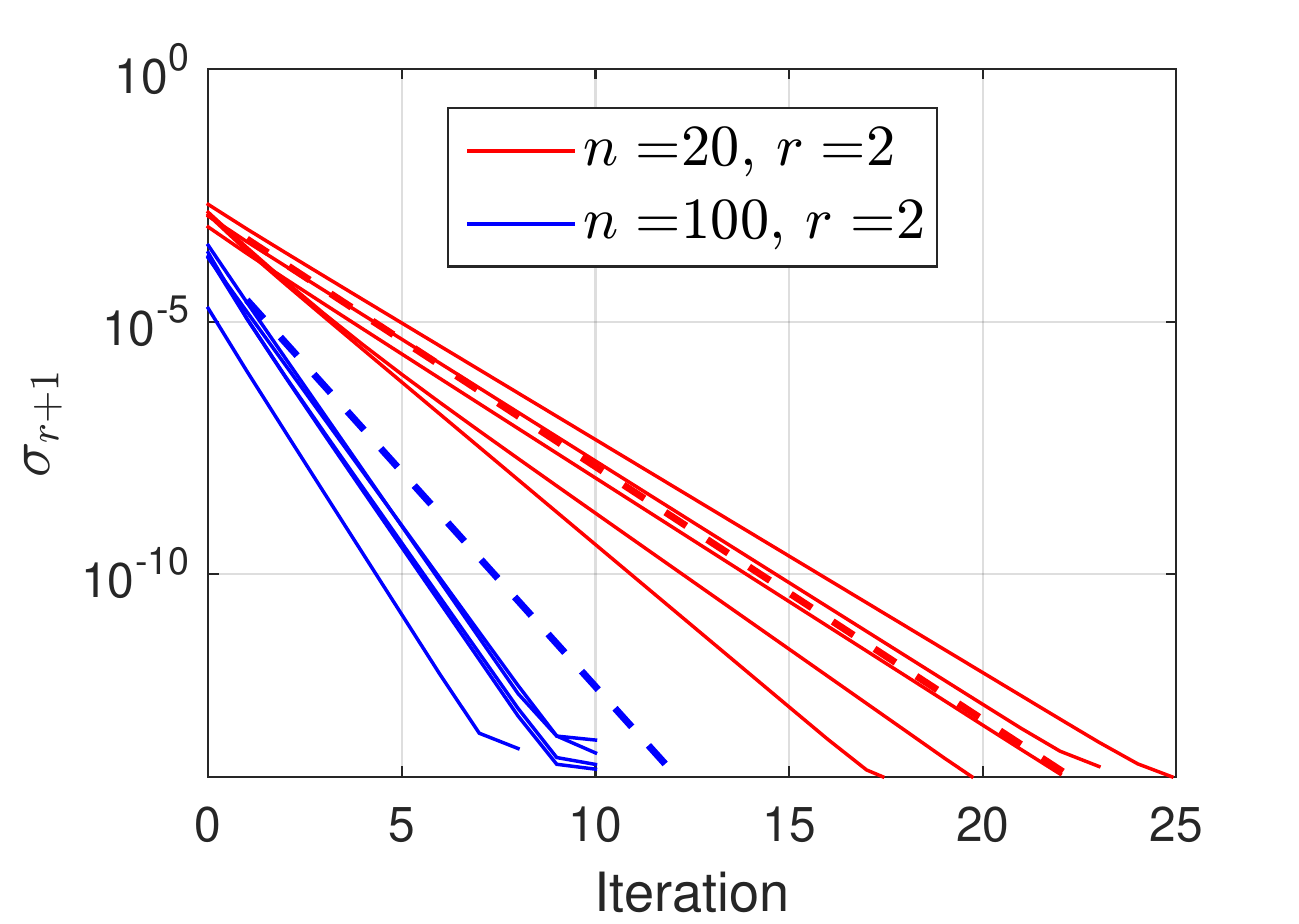}
\end{minipage}
\quad 
\begin{minipage}{.499\textwidth}
\centering
     \includegraphics[width=65mm]{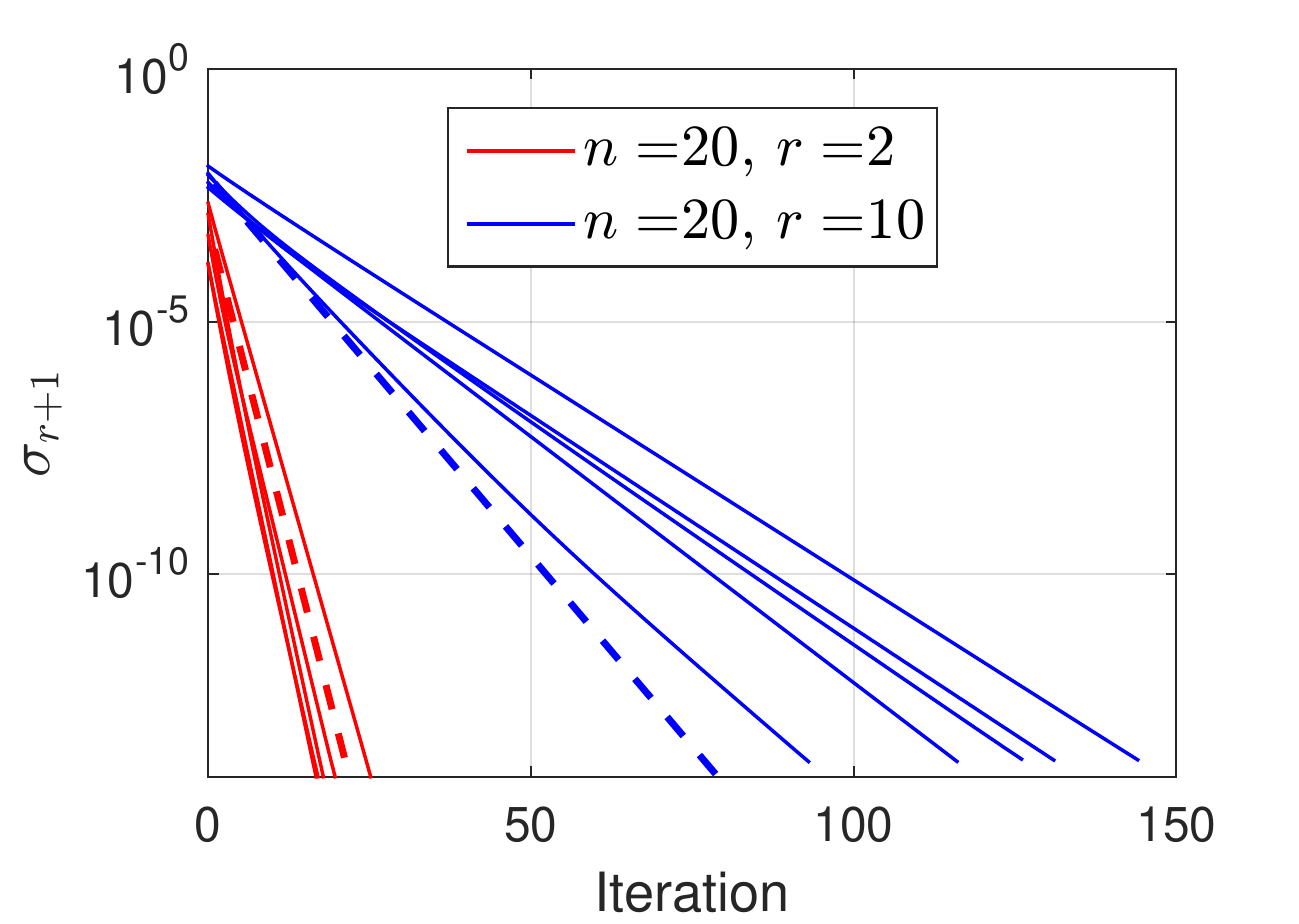}
\end{minipage}
  \caption{
Convergence of $\| \mathcal{T}_r(X) - X \|_F$ as the iterations proceed in Phase II. The convergence factor is faster for larger matrices when the rank is fixed (left), and slower for higher rank when the matrix size is fixed (right), reflecting Theorem~\ref{th: local convergence}. 
}
  \label{fig:convex}
\end{figure}


\subsection{Image separation}\label{sec:images}
One well known use of the SVD is for data and image compression, although currently it is no longer used for the JPEG format or other modern image formats. 
It is known that most images can be compressed significantly without losing the visible quality by using a low-rank approximation of the matrix that represents the image. 

Since each grayscale image can be expressed as a matrix, here we 
apply our algorithm to a set of 
four incomprehensible  images (shown as ``mixed'' in Figure~\ref{fig:andrepic}) that are random linear combinations of four `original' low-rank images. The latter were obtained from truncating four pictures (the famous `Lena',
along with photos of Audrey Hepburn, 
Angelina Jolie and Arnold Schwarzenegger, 
taken from the labeled faces in the wild dataset~\cite{LFWTech}) to rank exactly $15$ using singular value decomposition. 
As shown in Figure~\ref{fig:andrepic}, we can recover these low-rank images from the mixed ones using our Algorithm. In this experiment we had to decrease the minimal threshold in Phase I to $\tau_{tol} = 5\cdot 10^{-4}$ for obtaining the correct rank guess 15 for all four images. The standard choice $\tau_{tol} = 10^{-3}$ underestimated the target rank as 14, which resulted in poorer approximations in the subspace after Phase II (since it does not contain a rank 14 matrix), that is, the gap after singular value number 15 was less pronounced than in Fig.~\ref{fig:andrepic}. Nevertheless, the visual quality of the recovered images was equally good. 

We also note that 
in this experiment, and only in this experiment, 
restarting (Algorithm~\ref{alg:restart}) was invoked several times in Phase II, on average about 2 or 3 times. 


Of course, separation problems like this one have been considered extensively in the image processing literature~\cite{abolghasemi2012blind,bell1995information,zhao2013joint}, which is known as \emph{image separation}, and we make no claims regarding the actual usefulness of our approach in image processing. This example is included simply for an illustrative visualization of the recovery of the low-rank matrix basis by Algorithm~\ref{alg:heuristic greedy algorithm}.  In particular, our approach would not work well if the matrices of the images are not low-rank but have gradually decaying singular values. This is the case here for the original images from the database, which are not of low rank. Without the initial truncation our algorithm did not work in this example.

\begin{figure}[htbp]
  \centering
     \includegraphics[width=\textwidth]{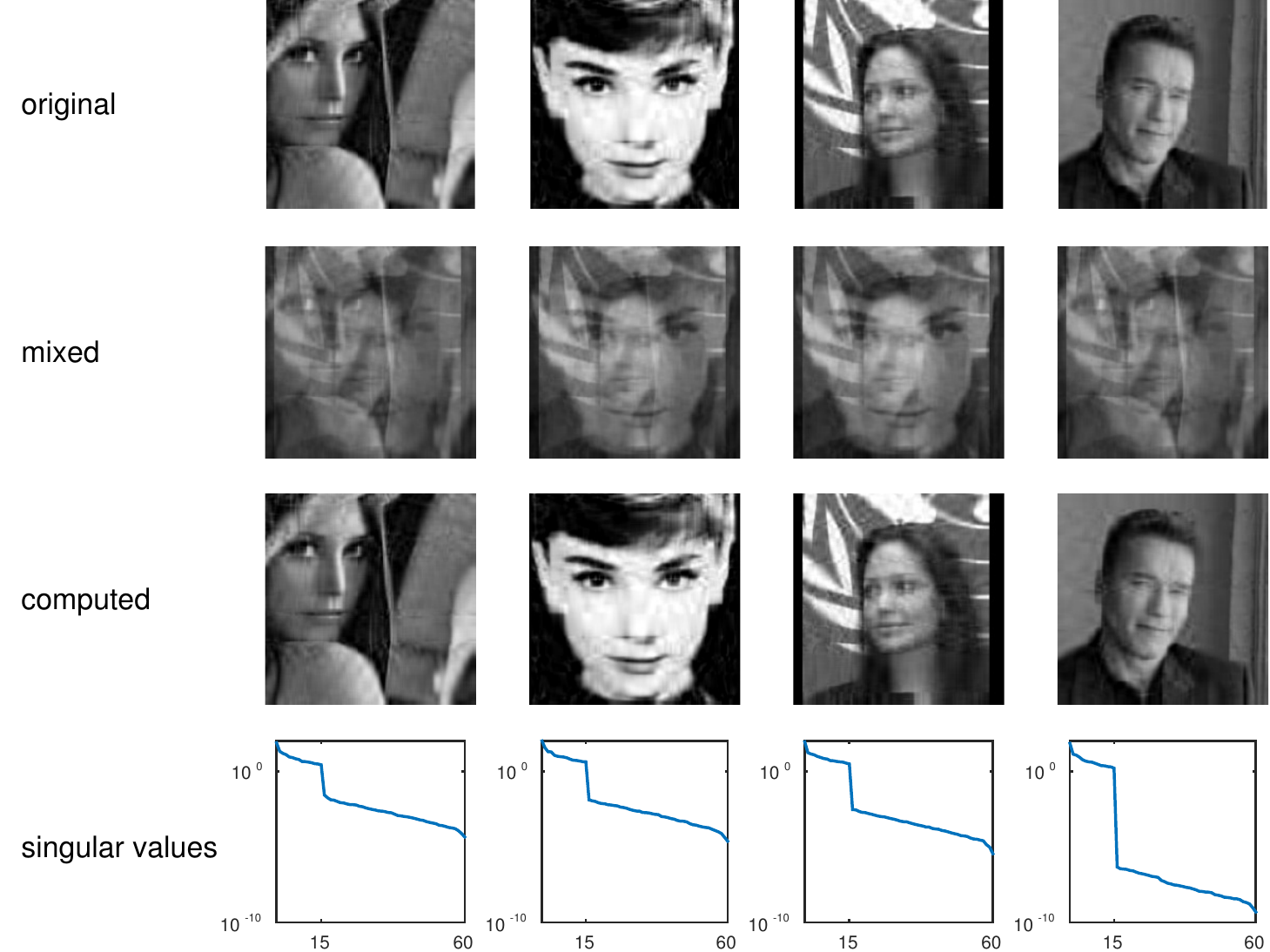}
  \caption{
We start with the middle images, which are 
obtained as random linear combinations of the rank-$15$ images (of size $200 \times 200$) in the top row. 
We apply our algorithm to obtain the four images in the third row lying in the same subspace (they were sorted accordingly). The fourth row shows the singular values of the recovered images.
}
  \label{fig:andrepic}
\end{figure}

\subsection{Computing exact eigenvectors of a multiple eigenvalue}\label{sec:exeig}
Eigenvectors of a multiple eigenvalue are not unique. For example, the identity matrix $I$ has any vector as an eigenvector. 
However, among the many possibilities one might naturally wish to obtain ``nice'' eigenvectors: for example, the columns of $I$ might be considered a good set of ``nice'' eigenvectors for $I$, as they require minimum storage. 

Numerically, the situation is even more complicated: a numerically stable algorithm computes eigenpairs $(\hat \lambda,\hat x)$ with residual $A\hat x-\hat\lambda\hat  x = O(u\|A\|)$, where $u$ is the unit roundoff. Since the eigenvector condition number is $O(\frac{1}{gap})$~\cite[Sec.~1.3]{stewart2} where $gap$ is the distance between $\lambda$ and the rest of the eigenvalues, the accuracy  of a computed eigenvector is typically $O(\frac{u\|A\|}{gap})$. This indicates the difficulty (or impossibility in general) of computing accurate eigenvectors for near-multiple eigenvalues in finite precision arithmetic. 
The common compromise is to compute a subspace corresponding to a cluster of eigenvalues, which is stable provided the cluster is well separated from the rest~\cite[Sec.~4.8]{baitemplates}. 

Here we shall show nonetheless  that it is sometimes possible to compute exact eigenvectors of  (near) multiple eigenvalues, if additional 
property is present that the eigenvectors are low-rank when matricized. 
As we discussed in the introduction, this also lets us compress the memory to store the information. Below we illustrate how this can be done with examples. 

\subsubsection{Eigenvectors of a multiple eigenvalue of a circulant matrix}\label{sec:fftmat}
As is well known, 
the eigenvector matrix of a circulant matrix is the FFT matrix~\cite[Sec.~4.8]{golubbook4th}. One can easily verify that each column of an $n^2\times n^2$ FFT matrix $F$ is rank-one when matricized to $n\times n$, exemplifying a strong low-rank property. 

Let us consider a circulant matrix $A\in\mathbb{C}^{n^2\times n^2}$ defined by 
\begin{equation}  \label{eq:ffta}
A = \frac{1}{n^2}F \Lambda F^*, 
\end{equation}
where $\Lambda = \mbox{diag}(\lambda_1,\ldots,\lambda_{n^2})$. 

Suppose for the moment that one is oblivious of the circulant structure 
(or perhaps more realistically, we can think of a matrix $A$ that is not circulant but has $d$ eigenvectors consisting of columns of $F$; such a matrix gives similar results)
and attempts to compute the $d$ smallest eigenvalues of $A$ by a standard algorithm such as QR. 

For the reason explained above, the numerically computed eigenvectors $\hat x_i$ obtained by MATLAB's {\tt eig} have poor accuracy. 
For concreteness suppose that $\lambda_1=\lambda_2=\dots =\lambda_d$ 
and $\lambda_{d+i} = \lambda_{d+i-1}+1$ for integers $i$, and we look for the eigenvectors corresponding to the first $d$ eigenvalues. 
With $n=10$ and $d=5$, 
the smallest angle between $\hat x_i$ and the first $d$ columns of the Fourier matrix were $O(1)$ for each $i$ (it should be 0 if $\hat x_i$ was exact). 
Nonetheless, the subspace spanned by the $d$ computed eigenvectors 
$[\hat x_1,\ldots, \hat x_d]$ has accuracy $O(u)$, as there is sufficient gap between $\lambda_k$ and $\lambda_{d+1}$. 
We therefore run our algorithm with the $n\times n$ matrix subspace 
\[
\mathcal{M} = \spann\{\mat(\hat x_1),\dots,\mat(\hat x_d)\}. 
\]
Our algorithm correctly  finds the rank ($=1$), and finds the 
eigenvectors $[x_1,\ldots, x_d]$, each of which is numerically rank-one and has $O(u)$ angle with a column of the Fourier matrix. 
This is an example where 
by exploiting structure we achieve high accuracy that is otherwise impossible with a backward stable algorithm; another established example being the singular values for bidiagonal matrices~\cite[Ch.~5]{demmelbook}.

For example, we let $n^2=20^2$ and compute the smallest $5$ eigenvalues of a circulant matrix $A = \frac{1}{n}F\mbox{diag}(1+\epsilon_1,1+\epsilon_2,1+\epsilon_3,1+\epsilon_4,1+\epsilon_5,6,\ldots,n^2)F^*$ where $\epsilon_i = O(10^{-10})$ was taken randomly. The matrix $A$ therefore has a cluster of five eigenvalues near $1$. The ``exact'' eigenvectors are the first five columns of the FFT matrix. 

\begin{table}[htbp]
\begin{center}
\caption{
Accuracy (middle columns) and memory usage for computed eigenvectors of a $20^2\times 20^2$ circulant matrix. 
}
\label{tab:eigvec}
\begin{tabular}{c|ccccc|c}
  &$v_1$&$v_2$&$v_3$&$v_4$&$v_5$& memory\\\hline
{\tt eig}
&4.2e-01&1.2e+00&1.4e+00&1.4e+00&1.5e+00
 & $O(n^2)$\\
{\tt eig}+Alg.~\ref{alg:heuristic greedy algorithm}
& 1.2e-12  & 1.2e-12  & 1.2e-12  & 1.2e-12 &  2.7e-14
& $O(n)$
\end{tabular}
\end{center}
\end{table}

Note that our algorithm recovers the exact eigenvector of a near-multiple eigenvalue with accuracy $O(10^{-12})$. Furthermore, the storage required to store the eigenvectors has been reduced from $5n^2$ to $5n$. 


\subsubsection{Matrices with low-rank eigenvectors}\label{sec:lowrankmat}
Of course, not every 
vector has low-rank structure when matricized. 
Nonetheless, we have observed that in many applications, the eigenvectors indeed have a low-rank structure that can be exploited. 
This observation may lead to the ability to deal with problems of scale much larger than previously possible.

Circulant matrices are an important example, as we have seen above (which clearly includes symmetric tridiagonal, symmetric banded, etc). We have observed that a sparse perturbation of a circulant matrix also has such structure. 

Other examples come from graph Laplacians. We have numerically observed that typically the Laplacian matrix of the following graphs have eigenvectors (corresponding to the smallest nonzero eigenvalues) that are low-rank: binary tree, cycle, path graph and the wheel graph all have rank 3 irrelevant of the size, the lollipop graph has rank 4 (regardless of the ratio of the complete/path parts), and the ladder graph has rank 2 and circular ladder (rank 2) regardless of the size, and barbell always has rank 5. 
Clearly, not every graph has such structure: a counterexample is a complete graph. Our empirical observation is that sparse graphs tend to have low-rank structure in the eigenvectors. 

Note that the low-rankness of the eigenvector depends also on the ordering of the vertices of the graph.\footnote{We thank Yuichi Yoshida for this observation.}
An ordering that seemed natural 
 have often exhibited low-rank property. 

Our algorithm does not need to know a priori that a low-rank structure is present, as its phase I attempts to identify whether a low-rank basis exists. 
We suspect that identifying and exploiting such structure will lead to significant improvement in both accuracy and efficiency (both in speed and memory). 
Identifying the conditions under which such low-rank structure is present is left as an open problem. 
We expect and hope that the low-rank matrix basis problem will find use in applications beyond those described in this paper. 

%
%

\appendix 
\section{Finding rank-one bases via tensor decomposition}\label{sec: reuction to tensors}
In this appendix, 
we describe the rank-one basis problem as a tensor decomposition problem. Recall that in this problem, we are promised that the given subspace $\mathcal{M}$ is spanned by rank-one matrices.
Thus we can apply Algorithm \ref{alg:getmats} (Phase II) with the precise rank guess directly. Alternatively, we can also stop after Algorithm \ref{alg:rankest} (Phase I), which in practice performs well (see Section \ref{sec: synthetic averaged}). The following tensor decomposition viewpoint leads to further algorithms.

Let $M_1,\dots,M_d$ be an arbitrary basis of $\mathcal{M}$, and let $\mathcal{T}$ be the $m\times n\times d$ tensor whose 3-slices are $M_1,\dots,M_d$. The fact that $\mathcal{M}$ possesses a rank-one basis is \emph{equivalent} to the existence of $d$ (and not less) triplets of vectors $(\mathbf{a}_\ell^{}, \mathbf{b}_\ell^{}, \mathbf{c}_\ell^{})$ 
where $\mathbf{a}_\ell^{}\in \mathbb{R}^{m}, \mathbf{b}_\ell^{}\in \mathbb{R}^{n},  \mathbf{c}_\ell^{}\in \mathbb{R}^{d}, $
such that 
\begin{equation}\label{eq:CP slicewise}
 M_k = \sum_{\ell = 1}^d c_{k,\ell}^{} \mathbf{a}_\ell^{} \mathbf{b}_\ell^T, \quad k = 1,\dots,d
\end{equation}
(here $c_{k,\ell}$ denotes the $k$th entry of $\mathbf{c}_\ell$). Namely, if such triplets 
$(\mathbf{a}_\ell^{}, \mathbf{b}_\ell^{}, \mathbf{c}_\ell^{})$ exist, then the assumed linear independence of the $M_k$ automatically implies 
that rank-one matrices $ \mathbf{a}_\ell^{} \mathbf{b}_\ell^T$ belong to $\mathcal{M}$. 
Using the outer product of vectors (denoted by $\circ$), we may express this relation in terms of the tensor $\mathcal{T}$ as
\begin{equation}\label{eq:CP decomposition}
    \mathcal{T} = \sum_{\ell=1}^d \mathbf{a}_\ell \circ \mathbf{b}_\ell \circ \mathbf{c}_\ell.
\end{equation}
This type of tensor decomposition into a sum of outer products is called the \emph{CP decomposition}, and is due to Hitchcock~\cite{Hitchcock1927a} (although the term CP decomposition appeared later). In general, the smallest $d$ required for a representation of the form~\eqref{eq:CP decomposition} is called the (canonical) rank of the tensor $\mathcal{T}$. We refer to~\cite{KoldaBader2009} and references therein for more details. In summary, we have the following trivial conclusion.

\begin{proposition}
The $d$-dimensional matrix space $\mathcal{M} = \myspan(M_1,\dots,M_d)$ possesses a rank-one basis if and only if the tensor $\mathcal{T}$ whose $3$-slices are the $M_1,\dots,M_d$ has (canonical) rank $d$. Any CP decomposition~\eqref{eq:CP decomposition} of $\mathcal{T}$ provides a rank-one basis $\mathbf{a}_1^{} \mathbf{b}_1^T, \dots, \mathbf{a}_d^{} \mathbf{b}_d^T$ of $\mathcal{M}$.
\end{proposition}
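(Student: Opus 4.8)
The statement is essentially a bookkeeping exercise assembling the equivalence already exhibited between the slicewise relation~\eqref{eq:CP slicewise} and the tensor relation~\eqref{eq:CP decomposition}, so the plan is to make the two implications and the underlying rank count explicit. The one structural fact I would isolate first is an \emph{unconditional} lower bound on the tensor rank. Since $M_1,\dots,M_d$ form a basis of $\mathcal{M}$, they are linearly independent, so the mode-3 unfolding $T_{(3)} \in \mathbb{R}^{d \times mn}$, whose $k$th row is $\vec(M_k)^T$, has full row rank $d$. Because the CP rank of a tensor is always at least the (matrix) rank of any of its unfoldings, this gives $\rank(\mathcal{T}) \ge d$ with no further assumption. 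This is exactly what is needed to justify the ``and not less'' phrasing, and it reduces both directions to comparing $\rank(\mathcal{T})$ against $d$ from above.

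For the forward implication, I would suppose $\mathcal{M}$ admits a rank-one basis $\mathbf{a}_1^{}\mathbf{b}_1^T,\dots,\mathbf{a}_d^{}\mathbf{b}_d^T$. Each basis matrix $M_k$ is then a linear combination of these, say with coefficients $c_{k,\ell}$, which is precisely~\eqref{eq:CP slicewise}; collecting the coefficients into vectors $\mathbf{c}_\ell = (c_{1,\ell},\dots,c_{d,\ell})^T$ and reading the identity across all slices yields the $d$-term decomposition~\eqref{eq:CP decomposition}, so $\rank(\mathcal{T}) \le d$. Combined with the lower bound, $\rank(\mathcal{T}) = d$.

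For the converse, I would assume $\rank(\mathcal{T}) = d$ and fix any CP decomposition~\eqref{eq:CP decomposition} with $d$ terms. Slicing it along the third mode returns~\eqref{eq:CP slicewise}, so every $M_k$ lies in $\myspan\{\mathbf{a}_1^{}\mathbf{b}_1^T,\dots,\mathbf{a}_d^{}\mathbf{b}_d^T\}$, whence $\mathcal{M} \subseteq \myspan\{\mathbf{a}_\ell^{}\mathbf{b}_\ell^T\}_{\ell=1}^d$. The right-hand span is generated by $d$ matrices and hence has dimension at most $d$, while $\dim \mathcal{M} = d$; the inclusion of a $d$-dimensional space into a space of dimension at most $d$ forces equality. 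Consequently the $d$ rank-one matrices are linearly independent, each belongs to $\mathcal{M}$, and they form a rank-one basis, which also establishes the final sentence of the proposition.

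The argument is genuinely routine, so I do not anticipate a real obstacle; the only point deserving care is the converse, where one must not merely read off that $\mathcal{M}$ is contained in the span of the rank-one terms, but must invoke the dimension count to conclude that those terms actually lie in $\mathcal{M}$ and are independent. Making the unfolding lower bound explicit at the outset is what keeps the exact equality $\rank(\mathcal{T}) = d$, rather than only $\rank(\mathcal{T}) \le d$, under control in both directions.
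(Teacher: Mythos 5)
Your proof is correct and follows essentially the same route as the paper, which treats the proposition as an immediate consequence of the discussion around~\eqref{eq:CP slicewise}--\eqref{eq:CP decomposition}: the slicewise/tensor identities are matched term by term, and the linear independence of $M_1,\dots,M_d$ forces the $d$ rank-one matrices to be independent and to span $\mathcal{M}$. The only addition is that you make explicit the lower bound $\rank(\mathcal{T})\ge d$ via the mode-3 unfolding, which the paper leaves implicit in the phrase ``and not less''; this is a standard fill-in rather than a different argument.
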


%

We remark that computing the rank of a general third-order tensor is known to be NP-hard~\cite{hastad1990,hillar2013most}. Therefore, it is NP-hard to check whether a matrix space $\mathcal{M}$ admits a rank-one basis. Nevertheless, we might try to find a rank-one basis by trying to calculate a CP decomposition~\eqref{eq:CP decomposition} from linearly independent $M_1,\dots,M_d$. We outline two common algorithms.


\subsection{Simultaneous diagonalization}\label{sec: simultaneous diagonalization}
If the tensor $\mathcal{T} \in \mathbb{R}^{m \times n \times r}$ is known to have rank $d$ and $d\leq \min(m,n)$, it is ``generically'' possible to find a CP decomposition~\eqref{eq:CP decomposition} in polynomial time using simultaneous diagonalization~\cite{Lathauwer2006,Lathauwer2004,Leurgans1993}.

Let us introduce the \emph{factor matrices} $\mathbf{A} = [\mathbf{a}_1,\ldots,\mathbf{a}_d] \in\mathbb{R}^{m\times d}$, $\mathbf{B} = [\mathbf{b}_d,\ldots,\mathbf{b}_d] \in\mathbb{R}^{n\times d}$, and $\mathbf{C} = [\mathbf{c}_1,\ldots,\mathbf{c}_d] \in\mathbb{R}^{d\times d}$.
Then~\eqref{eq:CP slicewise} reads
\[
 M_k = \mathbf{A} D_k \mathbf{B}^T, \quad k=1,\dots,d,
\]
where $D_k = \diag(\mathbf{c}_k^T)$, in which  $\mathbf{c}_k^T$ denotes the $k$th row of $\mathbf{C}$. In other words, a rank-one basis exists, if the $M_1,\dots,M_d$ can be simultaneously diagonalized. The basic idea of the algorithm of Leurgans, Ross, and Abel in~\cite{Leurgans1993} is as follows. One assumes $\rank(\mathbf{A}) = \rank(\mathbf{B}) = d$. Pick a pair $(k,\ell)$, and assume that $D_k$ and $D_\ell$ are invertible, and that $D_k^{} D_\ell^{-1}$ has $d$ distinct diagonal entries. Then it holds
\[
 M_k^{} M_\ell^+ \mathbf{A} = \mathbf{A} D_k^{} \mathbf{B}^T (\mathbf{B}^T)^+ D_\ell^{-1} \mathbf{A}^+ \mathbf{A} = \mathbf{A} D_k^{} D_\ell^{-1},
\]
where superscript $^+$ denotes the Moore-Penrose inverse. In other words, $\mathbf{A}$ contains the eigenvectors of $M_k^{} M_\ell^+$ to distinct eigenvalues, and is essentially uniquely determined (up to scaling and permutation of the columns). Alternatively, for more numerical reliability, one can compute an eigenvalue decompositions of a linear combination of all $M_k^{} M_\ell^+$ instead, assuming that the corresponding linear combination of $D_k^{} D_\ell^{-1}$ has distinct diagonal entries. Similarly, $\mathbf{B}$ can be obtained from an eigendecomposition, e.g. of $M_k^T (M_\ell^T)^+$ or linear combinations. Finally,
\[
 D_k = \mathbf{A}^+ M_k (\mathbf{B}^T)^+, \quad k = 1,\dots,d,
\]
which gives $\mathbf{C}$. The algorithm requires the construction of Moore-Penrose inverses of matrices whose larger dimension is at most $\max(m,n)$. Hence, 
the complexity is $O(mn^2)$.

The condition that the $D_k^{} D_\ell^{-1}$ or a linear combination of them should have distinct diagonal entries is not very critical, since it holds generically, if the matrices $M_1,\dots,M_d$ are randomly drawn from $\mathcal{M}$, or, when 
this is not possible, are replaced by random linear combination of themselves. The condition $\rank(\mathbf{A}) = \rank(\mathbf{B}) = d$ on the other hand, is a rather strong assumption on the rank-one basis $\mathbf{a}^{}_1\mathbf{b}_1^T,\dots,\mathbf{a}_d^{} \mathbf{b}_d^T$. It implies uniqueness of the basis, and restricts the applicability of the outlined algorithm a priori to dimension $d \le \min(m,n)$ of $\mathcal{M}$. There is an interesting implication on the condition~\eqref{eq: nontangential intersection} that we used for the local convergence proof of our algorithms. Theorem~\ref{th: local convergence} and Corollary~\ref{cor: local convergence Phase I} therefore apply at every basis element $\mathbf{a}_\ell^{} \mathbf{b}_\ell^T$ in this setting.

\begin{proposition}\label{prop: relation of full rank factor matrices to tangential interestcion}
Let $\mathbf{a}^{}_1\mathbf{b}_1^T,\dots,\mathbf{a}_d^{} \mathbf{b}_d^T$ be a rank-one basis such that $\rank(\mathbf{A}) = \rank(\mathbf{B}) = d$. Then~\eqref{eq: nontangential intersection} holds at any basis element $X_* = X_\ell = \mathbf{a}_\ell^{} \mathbf{b}_\ell^T$.
\end{proposition}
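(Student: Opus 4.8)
The plan is to apply Lemma~\ref{lem: suff. condition for local convergence} with the natural choice of complementary subspace. Fix $\ell$ and set $X_* = \mathbf{a}_\ell^{} \mathbf{b}_\ell^T$, which has rank $r=1$. Since the rank-one matrices $\mathbf{a}_1^{}\mathbf{b}_1^T,\dots,\mathbf{a}_d^{}\mathbf{b}_d^T$ form a basis of $\mathcal{M}$, the subspace
\[
\tilde{\mathcal{M}} = \spann\{ \mathbf{a}_j^{} \mathbf{b}_j^T \vcentcolon j \neq \ell \}
\]
is $(d-1)$-dimensional and complementary to $\spann\{X_*\}$. It then remains to verify the two range conditions of the lemma for every $\tilde{X} \in \tilde{\mathcal{M}}$.

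First I would record the column and row spaces of $X_*$: since $X_*$ is rank one, $\ran(X_*) = \spann\{\mathbf{a}_\ell^{}\}$ and $\ran(X_*^T) = \spann\{\mathbf{b}_\ell^{}\}$. Next, writing an arbitrary element of $\tilde{\mathcal{M}}$ as $\tilde{X} = \sum_{j \neq \ell} \beta_j^{} \mathbf{a}_j^{} \mathbf{b}_j^T$, one sees directly that $\ran(\tilde{X}) \subseteq \spann\{ \mathbf{a}_j^{} \vcentcolon j \neq \ell\}$ and $\ran(\tilde{X}^T) \subseteq \spann\{\mathbf{b}_j^{} \vcentcolon j \neq \ell\}$, simply because every term contributes a multiple of $\mathbf{a}_j^{}$ to the column space and of $\mathbf{b}_j^{}$ to the row space.

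The key step is then to invoke the full-rank hypotheses. Because $\rank(\mathbf{A}) = d$, the columns $\mathbf{a}_1^{},\dots,\mathbf{a}_d^{}$ are linearly independent, so $\mathbf{a}_\ell^{} \notin \spann\{\mathbf{a}_j^{} \vcentcolon j \neq \ell\}$; hence the one-dimensional space $\spann\{\mathbf{a}_\ell^{}\}$ meets the latter span only in the origin, giving $\ran(X_*) \cap \ran(\tilde{X}) = 0$. The identical argument applied to $\mathbf{B}$ (using $\rank(\mathbf{B}) = d$) yields $\ran(X_*^T) \cap \ran(\tilde{X}^T) = 0$. Both hypotheses of Lemma~\ref{lem: suff. condition for local convergence} are thus satisfied, and the conclusion~\eqref{eq: nontangential intersection} follows at $X_* = X_\ell$.

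I do not anticipate a genuine obstacle here: once the right complementary subspace $\tilde{\mathcal{M}}$ is identified, everything reduces to the elementary fact that a single column $\mathbf{a}_\ell^{}$ (resp.\ $\mathbf{b}_\ell^{}$) cannot lie in the span of the remaining columns when $\mathbf{A}$ (resp.\ $\mathbf{B}$) has full column rank $d$. The only point requiring a moment's care is confirming that $\tilde{\mathcal{M}}$ is genuinely complementary to $\spann\{X_*\}$ and of the right dimension, which is immediate from the linear independence of the rank-one basis itself.
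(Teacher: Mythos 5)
Your proof is correct and follows exactly the paper's route: the paper's own proof is the one-line observation that Lemma~\ref{lem: suff. condition for local convergence} applies with $\tilde{\mathcal{M}} = \spann\{\mathbf{a}_k^{}\mathbf{b}_k^T : k \neq \ell\}$, and your argument simply fills in the (correct) details of why the full-column-rank hypotheses on $\mathbf{A}$ and $\mathbf{B}$ give the required trivial range intersections.
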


\begin{proof}
This follows immediately from Lemma~\ref{lem: suff. condition for local convergence} by taking $\tilde{\mathcal{M}} = \spann \{ \mathbf{a}_k^{} \mathbf{b}_k^T \vcentcolon k \neq \ell \}$.
\qed\end{proof}



De Lathauwer~\cite{Lathauwer2006} developed the idea of simultaneous diagonalization further. His algorithm requires the matrix $\mathbf{C}$ to have full column rank, which in our case is always true as $\mathbf{C}$ must contain the basis coefficients for $d$ linearly independent elements $M_1,\dots,M_d$. The conditions on the full column rank of $\mathbf{A}$ and $\mathbf{B}$ can then be replaced by some weaker conditions, 
but, simply speaking, too many linear dependencies in $\mathbf{A}$ and $\mathbf{B}$ will still lead to a failure. A naive implementation of De Lathauwer's algorithm in~\cite{Lathauwer2006} seems to require $O(n^6)$ operations (assuming $m=n$). 

Further progress on finding the CP decomposition algebraically under even milder assumptions has been made recently in~\cite{domanov2014canonical}. It is partially based on the following observation: denoting by $m_\ell=\mbox{vec}(M_\ell)$ the $n^2\times 1$ vectorization of $M_\ell$ (which stacks the column on top of each other), 
and defining $\mbox{Matr}(\mathcal{T}) = [m_1,\ldots, m_r]\in\mathbb{R}^{mn\times d}$, we have 
\begin{equation}  \label{eq:mivec}
\mbox{Matr}(\mathcal{T}) =  (\mathbf{B} \odot \mathbf{A} ) \mathbf{C}^T, 
\end{equation}
where
\(
\mathbf{B} \odot \mathbf{A} = [\mathbf{a}_1 \otimes \mathbf{b}_1,\ldots,\mathbf{a}_d \otimes \mathbf{b}_d] \in \mathbb{R}^{mn \times d}
\)
is the so called \emph{Khatri-Rao product} of $\mathbf{B}$ and $\mathbf{A}$ (here $\otimes$ is the ordinary Kronecker product). If $\mathbf{C}$ (which is of full rank in our scenario) would be known, then $\mathbf{A}$ and $\mathbf{B}$ can be retrieved from the fact that the matricizations of the columns of
\(
\mbox{Matr}(\mathcal{T})\mathbf{C}^{-T} = \mathbf{B} \odot \mathbf{A}
\)
must be rank-one matrices. In~\cite{domanov2014canonical} algebraic procedures are proposed that find the matrix $\mathbf{C}$ from $\mathcal{T}$.

Either way, by computing the CP decomposition for $\mathcal{T}$ we can, at least in practice, recover the rank one basis $\{\mathbf{a}_\ell^{}\mathbf{b}_\ell^T\}$ in polynomial time if we know it exists. This is verified in our MATLAB experiments using Tensorlab's \texttt{cpd} in Section~\ref{sec:comparetensor}.

\subsection{Alternating least squares}

An alternative and cheap workaround are optimization algorithms to calculate an \emph{approximate} CP decomposition of a given third-order tensor, a notable example being \emph{alternating least squares} (ALS), which was developed in statistics along with the CP model for data analysis~\cite{Carroll1970,Harshman1970}. In practice, they often work astonishingly well when the exact rank is provided.

Assuming the existence of a rank-one basis, that is, $\rank(\mathcal{T}) = d$, the basic ALS algorithm is equivalent to a block coordinate descent method applied to the function
\[
f(\mathbf{A},\mathbf{B},\mathbf{C}) = \frac{1}{2} \left\| \mathcal{T} - \sum_{\ell=1}^d \mathbf{a}_\ell \circ \mathbf{b}_\ell \circ \mathbf{c}_\ell \right\|^2_F.
\]
The name of the algorithm comes from the fact that a block update consists in solving a least squares problem for one of the matrices $\mathbf{A}$, $\mathbf{B}$ or $\mathbf{C}$, since $f$ is quadratic with respect to each of them. It is easy to derive the explicit formulas. For instance, fixing $\mathbf{A}$ and $\mathbf{B}$, an optimal $\mathbf{C}$ with minimal Frobenius norm is found from~\eqref{eq:mivec} as
\(
\mathbf{C} = \mbox{Matr}(\mathcal{T})^T (\mathbf{B}^T \odot \mathbf{A}^T)^+.
\)
The updates for the other blocks look similar when using appropriate reshapes of $\mathcal{T}$ into a matrix; the formulas can be found in~\cite{KoldaBader2009}.



The question of convergence of ALS is very delicate, and has been subject to many studies. As it is typical for these block coordinate type optimization methods for nonconvex functions, convergence can, if at all, ensured only to critical points, but regularization might be necessary, see~\cite{uschmajew2012local,LiKindermannNavasca2013,Mohlenkamp2013,XuYin2013,WangChu2014,Uschmajew2014} 
 for some recent studies, and~\cite{KoldaBader2009} in general. Practical implementations are typically a bit more sophisticated than the simple version outlined above, for instance the columns of every factor matrix should be rescaled during the process for more numerical stability. Also a good initialization of $\mathbf{A}$, $\mathbf{B}$, and $\mathbf{C}$ can be crucial for the performance. For instance one may take the leading HOSVD vectors~\cite{de2000multilinear} or the result of other methods~\cite{Kindermann2013} as a starting guess for ALS.



\bibliographystyle{spmpsci_nodoi}      
\bibliography{lowrankbases}   

%
%

\appendix

\end{document}